\documentclass[12 pt,aps,amssymb] {amsart}
\usepackage{amsmath,amsthm,amssymb}
 \setlength{\textheight}{8.5in}
\setlength{\textwidth}{6.0in} 
\newtheorem{theorem}{Theorem}
\newtheorem{proposition}[theorem]{Proposition}
\newtheorem{lemma}[theorem]{Lemma}

 \theoremstyle{definition}
\newtheorem{definition}[theorem]{Definition}

 \begin{document}
\title {A parabolic action on a proper, CAT(0) cube complex}\par\smallskip

\author {Yael Algom-Kfir, Bronislaw Wajnryb, Pawel Witowicz}
\begin{abstract} We consider diagram groups as defined by V. Guba and M. Sapir in {\em Diagram groups}, Mem. Amer. Math. Soc. 130 (1997). A diagram group $G$ acts on the associated cube complex $K$ by isometries. It is known that if a cube complex $L$ is of a finite dimension then every isometry $ g$ of $L$ is semi-simple - $inf\{d(x,gx): x\in L\}$ is attained. It was conjectured by D. S. Farley that in the case of a diagram group $G$ the action of $G$ on the associated cube complex $K$ is by semisimple isometries also when $K$ has an infinite dimension. In this paper we give a counterexample to Farley Conjecture and we show that  R. Thompson's group $F$, considered as a diagram group, has some elements which act as parabolic (not semi-simple) isometries on the associated cube complex.
\end{abstract}
 \maketitle

\renewcommand{\thefootnote}{}
\footnote{2010\emph{Mathematics Subject Classification}: Primary  20F65, 20F67.}

\section{Introduction}

Isometries of metric spaces can be classified according to their translation length. If $g$ is an isometry of $X$ then its translation length is  $|g| = \inf \{ d(x,gx) \mid x \in X \}$. We distinguish between two cases:
\begin{itemize}
	\item $g$ is {\it semi-simple} when the infimum in the definition of $g$ is realized as a minimum. $g$ is {\it elliptic} when $|g|=0$ and $g$ is {\it hyperbolic} when $|g|>0$. 
	\item If the infimum is not realized $g$ is called {\it parabolic}.
\end{itemize}

 Bridson proved in \cite{B} that if $X$ is an $M_\kappa$ polyhedral complex where the number of isometry types of polygons ({\em shapes}) is finite, all isometries are semi-simple. In particular it is true if $X$ is a finite-dimensional cube complex where we identify each cube with the unit cube in the Euclidean space and we consider the induced length metric in the complex.

Hilbert Space $\mathit{l}^2(\mathbf{Z})$ has a structure of a CAT(0) cube complex and it admits a parabolic isometry (see \cite{BH}, Example II.8.28) but it is not a proper space and it is not locally of a finite dimension.

 Are all isometries of a proper CAT(0) cube complex semi-simple? 

If $G=D(P,w)$ is a diagram group (see the next section) corresponding to a given 
finite presentation of a semigroup $P$ and to a word $w$ in $P$ then there is a proper, locally finite CAT(0) cube complex $K(P,w)$ associated to $G$ and $G$ acts freely on $K(P,w)$ by cellular isometries. 
Is this action by semi-simple isometries? 

This question was explicitly asked by 
Farley \cite{farley}, Question 3.17, who suggested that the answer is affirmative.  The problem is known as Farley Conjecture\par\smallskip
{\bf Farley Conjecture.} For every $P$ and $w$ the diagram group $D(P,w)$ acts on $K(P,w)$ by semi-simple isometries.\par\smallskip

 In this paper we give a counter-example to Farley Conjecture. There is a diagram group  $G=D(P,w)$ and an element $g\in G$ which acts as a parabolic isometry on the proper CAT(0) cube complex $K(P,w)$. Moreover, $|g|$ cannot be approximated by a sequence of centers of cubes. \\

The example arises from the representation of R. Thompson's group $F$ as a diagram group. The diagram in Figure 1 (left side) represents the element of Thompson's group which acts as a parabolic isometry on the associated cube complex. \par\medskip

The paper is organized as follows. In Section 2 we recall the notions of diagrams over a given presentation of a semi-group $P$, the associated cube complex $K(P,w)$, the diagram group $D(P,w)$ and their basic properties. We formulate and explain the main result: Theorem 1.  In Section 3 we describe  some finite convex subcomplexes of $K(P,w)$ (Theorem \ref{convex}). In Section 4 we describe precisely the action of an element of $D(P,w)$ restricted to a given cube of $K(P,w)$. In Section 5 we restrict the investigation to the R. Thompson's group $F$ and we prove that the element $g$ of the group $F$ shown on Figure \ref{diagramg} acts as a parabolic isometry on $K(P,w)$.

\section{Preliminaries}
We consider plane diagrams and diagram groups as in \cite{GS} and in \cite{farley}. We recall the basic notions.
Let $A$ be a finite set of letters and let $S_A$ be the free semi-group on
letters from $A$. Let $R$ be some finite set of pairs of words $(u,v)$ from
$S_A$ such that $u\ne v$ and if $(u,v)\in R$ then $(v,u)\in R$.

Let $P$ be the semigroup with the set of generators $A$ and the set of
relations  $\{u=v: (u,v)\in R\}$. We consider diagrams over the given presentation $\langle A|R \rangle $ of the semigroup $P$.
 The simplest type of diagrams, a $degenerate$ diagram, say $U_0$, consists of one path which leads monotonically from left to right. It has the left end and the right end and is composed of a finite number of edges, which are labelled by letters of $A$ and form a word $w$. We write $top(U_0)=bot(U_0)=w$. Another simple diagram is a cell $C$. It is a disk bounded by two paths with common end points, the {\em top path} $\overline C$  and the {\em bottom path} $\underline C$. The paths have the left end and the right end and are monotonic in $x$-coordinate and are oriented from left to right. Each path is composed of a finite number of edges which are labelled by letters of $A$. The letters read from left to right form words $u=top(C)$ and $v=bot(C)$ and the pair $(u,v)$ must belong to the set $R$. Then the cell $C$ is denoted $C(u,v)$.

Suppose $U_0$ is a degenerate diagram with $top(U_0)=w$. Let  $(u,v)$ be a pair in $R$ and suppose that $w$ contains a subword $u$. We can attach a cell $C(u,v)$ to $U_0$ from below along the suitable sub-arc $e$ ( the union of consecutive edges) with the label $u$. We get a new diagram, say $U_1$. It has the {\em top path} $\overline U_1$ with the label $top(U_1)=w$ and the {\em bottom path} $\underline U_{\,1}$ with the label $bot(U_1)=w_1$, where $w_1$ is obtained from $w$ by the replacement of the suitable subword $u$ by $v$.

We may attach another cell from below to $U_1$ along some sub-arc of $\underline U_{\,1}$ and so on. Every diagram is obtained from a degenerate diagram by a consecutive attachment of a finite number of cells from below. An {\em atomic}  diagram is a diagram with only one cell.

If $U$ is a diagram then the $inverse$ diagram $U^{-1}$ is obtained from $U$ by the reflection with respect to a horizontal line. In particular $top(U^{-1})=bot(U)$ and $bot(U^{-1})=top(U)$.

 Isotopic diagrams with the same labels on the corresponding edges are considered equal. The same diagram may be obtained by attaching cells in different order.  If a diagram contains two cells $C_1,C_2$ such that $C_1=C(u,v)$ and $C_2=C(v,u)$ and $ \underline C_{\,1}=\overline C_2$ then we can cancel $C_1$ and $C_2$ in the diagram removing the middle path  $ \underline C_{\,1}=\overline C_2$, contracting the interiors  and identifying $\overline C_1$ with $\underline C_{\,2}$. A diagram is $reduced$ if no cancelation is possible. A suitable sequence of cancelations will take every diagram to a reduced diagram and the result is independent of the order of the cancelations (see \cite{kilibarda} and \cite{GS}, Theorem 3.17).

If $U,V$ are diagrams with $bot(U)=top(V)$ we can compose $U$ and $V$ by concatenation (attaching $V$ under $U$). We denote the result by $U\circ V$. If the result is not reduced we may reduce it. The reduced diagram is denoted by $UV$.

There is a partial order on the reduced diagrams. We say that a diagram $H$ is a prefix of a diagram $W$ and we write $H<W$ if $top(H)=top(W)$ and there exists a diagram $F$ with $top(F)=bot(H)$ such that $H\circ F=W$ (this includes the case $H=W$.) In particular $H\circ F$ is a reduced diagram, no cancelation is possible. Since $F$ is obtained from its top path $\overline F$ by attachments of consecutive cells hence $W$ is obtained from $H$ by attachments of consecutive cells, without cancelations. Diagram $F$ is called a {\em suffix} of $W$.

 We describe some structure on the cells in a diagram $W$.  The cells which are attached to the top path $\overline W$ are called the cells of the first generation. Any collection of these cells can be attached in an arbitrary order to the top path $\overline W$. When we attach all of them we get the diagram $D_1$. If $C_i$ is a cell of the first generation then $\underline C_{\,i}$ is contained in $\underline D_{\,1}$ and is disjoint from (has no common edge with) $\overline D_1=\overline W$. 
$D_1$ is a prefix of $W$. We consider next the cells which are attached to $\underline D_{\,1}$ from below. These are the cells of the second generation. When we attach all of these cells we get the diagram $D_2$. If $C_j$ is a cell of the second generation then $\underline C_{\,j}$ is contained in $\underline D_{\,2}$ and is disjoint from $\underline D_{\,1}$.  And so on. We now define a partial order on the cells in $W$. A cell $C_i$ of generation $i$ is earlier (a predecessor) of a cell $C_j$ of generation $i+1$ if $\underline C_{\,i}$ has a common edge with $\overline C_j$.
We write $C_i\prec C_j$. The transitive closure of this relation is a partial order on the cells of $W$. This partial order is also denoted by "$\prec$".
In particular an earlier cell must be of an earlier generation. Different cells of the same generation are incomparable.

It follows from this description that if $C_i\prec C_j$ then in every sequence of attachments of cells, without cancelation,  which start with $\overline W$ and ends with $W$ the cell $C_i$ must be attached before the cell $C_j$. If $H$ is a prefix of $W$ and if the cell $C_j$ is contained in $H$ then the cell $C_i$ is also contained in $H$.

A diagram $\Phi$ is called $thin$ if all of its cells are of the first generation - for every cell $C$ in $\Phi$ we have $\overline C$ contained in $\overline \Phi$. Then we may attach the cells of $\Phi$ to $\overline\Phi$ in an arbitrary order and when we attach any subfamily of the cells of $\Phi$ to $\overline\Phi$ we get a prefix of $\Phi$.\par\bigskip

 We fix a word $w\in S_A$ and consider the set
$\mathcal{V}(P,w)$ of all reduced diagrams $U$ with $top(U)=w$. They form the
vertices of an abstract cubical complex $K_{ab}(P,w)$ (see \cite{farley}). A $k$-dimensional cell of this complex is a $k$-dimensional cube $Q$ defined in the following way. We fix a vertex $U$ and a thin diagram $\Phi$ with $top(\Phi)=bot(U)$. The set of the vertices of $Q$ coincides with $C(U,\Phi)=\{UV:V<\Phi\}$. A prefix $\Phi_I$ of $\Phi$ is also a thin diagram and defines a face of $Q$. In particular a pair of vertices of $K_{ab}(P,w)$ form an edge if and only if one of the corresponding diagrams is obtained from the other by attaching one cell.

We consider the geometric realization of this complex, called $K(P,w)$, where each $k$-dimensional cube is given the standard metric of the unit $k$-dimensional cube. This induces the intrinsic length metric on the complex $K(P,w)$ (see Section 3) 
and makes it into a cubical complex.

 Cubical complexes were studied in \cite{gromov} and in \cite{BH}. Farley proved in \cite{farley}, Theorem 3.13 that $K(P,w)$ is a CAT(0) complex.

A diagram $U$ in $\mathcal{V}(P,w)$ is called {\em spherical} if $top(U)=bot(U)=w$. The reduced spherical diagrams form the diagram group  $G=D(P,w)$ with respect to the composition of diagrams. Let $g$ be a fixed spherical diagram in $D(P,w)$. Then $g$ acts on
the vertices of $K(P,w)$ by multiplication on the left $g(U)=gU$. This action extends to a linear map on each cube
and defines an isometry of the complex $K(P,w)$.

Farley has asked in \cite{farley}, Question 3.17, whether for every $P$ and $w$ the group $D(P,w)$ acts by semi-simple isometries and he suggested that the answer is affirmative.  The problem is known as  Farley Conjecture\par\smallskip
{\bf Farley Conjecture.} For every $P$ and $w$ the diagram group $D(P,w)$ acts on $K(P,w)$ by semisimple isometries.\par\smallskip

We shall prove

\begin{theorem} Farley conjecture is false for R. Thompson group $F$.\end{theorem}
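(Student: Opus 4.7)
Represent $F$ as the diagram group $D(P,w)$ with $P = \langle a \mid a = a^2 \rangle$ and $w = a$, so that vertices of $K(P,w)$ are reduced diagrams with $top(U) = a$, and let $g$ be the spherical diagram depicted in Figure \ref{diagramg}. The plan is to establish that $g$ is parabolic by (a) exhibiting an explicit sequence of points $x_n \in K(P,w)$ along which $d(x_n, gx_n)$ decreases to some $\ell > 0$, to be identified with $|g|$, and then (b) ruling out any minimizer in $K(P,w)$.

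For the upper bound I would construct a sequence $U_n \in \mathcal{V}(P,w)$ whose bottom words grow so as to expose more and more of the repetitive combinatorial structure of $g$. Concretely, left-multiplication $g \circ U_n$ undergoes a predictable pattern of cancellations, and the cubical distance $d(U_n, g U_n)$ can then be computed directly as a function of $n$ that converges monotonically to a definite limit $\ell > 0$. For the matching lower bound, invoke Theorem \ref{convex}: any geodesic from $x$ to $gx$ is contained in a finite convex subcomplex $K_x$ of $K(P,w)$; since $K_x$ is a finite-dimensional CAT(0) cube complex, Bridson's theorem makes $g|_{K_x}$ semi-simple and its translation length computable. Running this over a nested $g$-invariant family of such convex subcomplexes and passing to the limit yields $|g| = \ell$.

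For the non-attainment step, suppose for contradiction that $d(x^*, g x^*) = \ell$ for some $x^* \in K(P,w)$. Then $x^*$ lies in the interior of some cube $Q = Q(U, \Phi)$. Using the explicit description of the affine map $g|_Q$ developed in Section 4, I would compute $\min_{y \in Q} d(y, gy)$ and show that it is strictly greater than $\ell$, by classifying the possible combinatorial types of $Q$ relative to the fixed structure of $g$ and checking each case. The stronger assertion that $|g|$ is not approximated by centers of cubes follows from the same analysis: on each relevant cube $Q$, the point minimizing $d(y, gy)$ is determined by the linear data of $g|_Q$, and for the cubes along the minimizing sequence this optimum is not the center.

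The principal obstacle is this non-attainment step. Because $K(P,w)$ is infinite-dimensional, one must simultaneously rule out minimizers on infinitely many combinatorial types of cubes; the work consists in extracting, from the cell-level description of $g$, a uniform strict inequality $\min_{y \in Q} d(y, gy) > \ell$ on every cube, together with quantitative control showing the gap closes only along the specific sequence from the upper bound step, which escapes every finite convex subcomplex. Once this uniform strictness is in place, the parabolicity of $g$, and the failure of Farley's conjecture for $F$, follow immediately.
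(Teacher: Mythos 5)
Your overall architecture (an explicit minimizing sequence for the upper bound, plus a per-cube analysis for non-attainment) matches the paper's, but three of your steps have genuine problems. First, the upper bound cannot be run along vertices: for every vertex $U$ the diagram $U^{-1}gU$ has at least $4$ cells (a nontrivial combinatorial induction in the paper), and since $U$ and $gU$ sit at opposite corners of the cube $I^N$ of Lemma \ref{embedding}, one gets $d(U,gU)\ge 2$. The true translation length is $|g|=\sqrt{2}$, and the paper's minimizing sequence consists of carefully chosen \emph{interior} points of cubes of dimension $2n\to\infty$, with coordinates tuned so that $d(y,gy)=\sqrt{2+2/(n+1)}$; a vertex sequence $U_n$ as you propose would converge to $2$, not to $|g|$. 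Second, your lower-bound mechanism via Bridson does not apply: the finite convex subcomplexes $K_{U,V}$ are not $g$-invariant, so $g$ restricted to one of them is not an isometry \emph{of that subcomplex}, and Bridson's semisimplicity theorem says nothing about such a restriction. The paper instead embeds $K_{U_0,V_0}$ isometrically into $I^N$ (Lemma \ref{embedding}) and estimates the length of any path from $y$ to $gy$ coordinate by coordinate.

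Third, and this is the heart of the matter, which you flag as ``the principal obstacle'' but do not resolve: the uniform strict inequality $d(y,gy)>\sqrt{2}$ for every point $y$ requires a concrete idea, not just a promise of case-checking, because $K(P,w)$ contains cubes of every dimension and there is no a priori finite list of ``combinatorial types of $Q$'' to run over. The paper's substitute is (i) the lemma that every conjugate $U^{-1}gU$ with at most $4$ cells must have one of finitely many explicit shapes (Figure \ref{4cells}), hence always has at least $4$ cells; (ii) a choice of $2$, $3$ or $4$ distinguished coordinates of $I^N$, attached to the cells of $W_0$ meeting its left and right ends, such that the $\prec$-relations among the corresponding cells confine the projected path to a union of squares in which $y$ and $gy$ are at distance exactly $\sqrt{2}$; and (iii) the exhibition of one additional coordinate that is necessarily non-constant along the path, which is what turns $\ge\sqrt{2}$ into the strict inequality $>\sqrt{2}$ and hence non-attainment. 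Without some version of (i)--(iii), your plan does not close.
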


Recall (see \cite{GS}, Example 6.4) that R. Thompson group $F$ is isomorphic to the diagram group $D(P,x)$ for the presentation of the trivial semigroup $P$ with one generator $x$ and one relation $x=x^2$. In the diagrams for $P$ there are only two kinds of cells: an {\em upper cell} $C(x,xx)$ and a {\em lower cell} $C(xx,x)$. We shall prove that the element $g$ on Figure \ref{diagramg}, left side, acts as a parabolic isometry on $K(P,x)$.

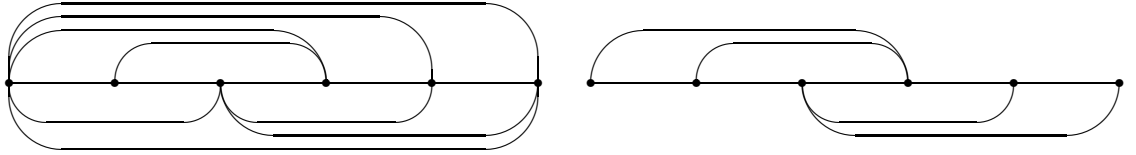
\begin{figure}
\centering
\begin{picture}(400,120)(20,-20)

\put(10,0){\line(1,0){200}}   
 \put(10,0){\circle*{3}}
 \put(50,0){\circle*{3}} \put(90,0){\circle*{3}}
 \put(130,0){\circle*{3}}
 \put(170,0){\circle*{3}}
 \put(210,0){\circle*{3}}

\put(90,0){\oval(80,30)[t]} \put(70,0){\oval(120,40)[t]}
 \put(130,0){\oval(80,30)[b]}  \put(150,0){\oval(120,40)[b]}
 \put(90,0){\oval(160,50)[t]}    \put(110,0){\oval(200,60)[t]}

 \put(50,0){\oval(80,30)[b]}   \put(110,0){\oval(200,50)[b]}

\put(230,0){\line(1,0){200}}   
 \put(230,0){\circle*{3}}
 \put(270,0){\circle*{3}} \put(310,0){\circle*{3}}
 \put(350,0){\circle*{3}}
 \put(390,0){\circle*{3}}
 \put(430,0){\circle*{3}}

\put(310,0){\oval(80,30)[t]} \put(290,0){\oval(120,40)[t]}
 \put(350,0){\oval(80,30)[b]}  \put(370,0){\oval(120,40)[b]}

\end{picture}
\caption{The diagram defining $g$.}\label{diagramg}
\end{figure}

\par\medskip
The following result is known to experts and is not difficult to prove
\begin{theorem}
\label{wypuklosc}
Let $K$ be a locally finite,  CAT(0) cubical complex. Let $K'$ be a finite-dimensional,
connected subcomplex of $K$. If
\newline
 $(*)$ for every cube $C$ in $K$ the intersection $C\cap K'$ is a cube or empty

then $K'$ is a convex subcomplex of $K$.
\end{theorem}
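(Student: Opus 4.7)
The plan is to reduce the statement to the local-to-global principle for convexity in CAT(0) spaces: I will show $K'$ is closed in $K$ and locally convex at every point, then conclude convexity from the fact that a connected, complete, locally convex subset of a CAT(0) space is convex.

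The key use of $(*)$ is in verifying the link condition at every vertex $v \in K'$: namely, that $\mathrm{lk}(v, K')$ is a full subcomplex of $\mathrm{lk}(v, K)$. Suppose edges $e_1, \ldots, e_k$ at $v$ lie in $K'$ and together span a $k$-cube $C$ of $K$. By $(*)$, the intersection $C \cap K'$ is a face of $C$; this face contains $v$ together with each opposite vertex $v + e_i$, and the only such face of $C$ is $C$ itself, so $C \subseteq K'$. Because vertex links in CAT(0) cube complexes are flag (Gromov's criterion), this edge-closure property is exactly fullness. Fullness of $\mathrm{lk}(v, K')$ in $\mathrm{lk}(v, K)$ then translates into convexity of $K' \cap B$ inside a small CAT(0) ball $B$ centered at $v$, because $B$ is the Euclidean cone on the link and full subcomplexes of all-right spherical complexes are $\pi$-convex. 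At a non-vertex point $p$, local convexity is immediate: $p$ lies in the interior of some cube $C$, and $(*)$ makes $C \cap K'$ a sub-cube.

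To globalize, $K'$ is closed in $K$ as a subcomplex of a locally finite complex, and $K$ is complete, so $K'$ inherits completeness. Combined with the hypothesis of connectedness and the local convexity just established, the local-to-global principle for CAT(0) spaces delivers convexity of $K'$ in $K$. Concretely, one produces a local geodesic in $K'$ joining any two given points $x, y \in K'$ by iteratively straightening a connecting path using local convex neighborhoods, and invokes the fact that in a complete CAT(0) space a local geodesic is a global geodesic (Bridson--Haefliger, Proposition II.1.4), so that this local geodesic coincides with the unique CAT(0) geodesic from $x$ to $y$ in $K$. The main technical obstacle lies in this straightening argument; the finite-dimensionality hypothesis on $K'$ is what keeps the procedure combinatorially controlled even though $K$ itself may be infinite-dimensional.
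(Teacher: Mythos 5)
The paper does not actually prove Theorem \ref{wypuklosc}: it is stated as ``known to experts and not difficult to prove,'' so there is no argument of the authors' to compare yours against. Your strategy --- fullness of vertex links, $\pi$-convexity of full subcomplexes of all-right spherical complexes, and the local-to-global principle that a connected, complete, locally convex subspace of a CAT(0) space is convex --- is the standard folklore proof, and the vertex-link step is carried out correctly: $(*)$ does force a cube of $K$ all of whose edges at $v$ lie in $K'$ to lie entirely in $K'$, and that is exactly fullness of $\mathrm{lk}(v,K')$ in $\mathrm{lk}(v,K)$.

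Two points need repair. First, your treatment of non-vertex points does not hold up as written: if $p$ lies in the open cube $C\subseteq K'$, the remark that ``$(*)$ makes $C\cap K'$ a sub-cube'' is vacuous (then $C\cap K'=C$) and says nothing about the cubes of $K$ containing $C$ as a proper face, which is what a small ball about $p$ sees; a priori two cubes of $K'$ meeting along $C$ could span a larger cube of $K$ not in $K'$, destroying local convexity at $p$. What you need is fullness of $\mathrm{lk}(C,K')$ in $\mathrm{lk}(C,K)$ for every positive-dimensional cube $C$ of $K'$, and fortunately this follows from $(*)$ by exactly the same argument as at vertices (if cubes of $K'$ containing $C$ span a larger cube $D$ of $K$, then $D\cap K'$ is a face of $D$ containing all of them, hence equals $D$); the gap is easily filled, but it must be filled. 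Second, in the globalization step the convergence of the curve-shortening process requires compactness or completeness that you do not really supply: what makes it work is that $K$ is proper (local finiteness gives local compactness, and for the complexes $K(P,w)$ properness is asserted in the paper), together with the fact that the finite-dimensional subcomplex $K'$ has finitely many shapes and is therefore a complete geodesic space in its induced path metric by Bridson's theorem. Your appeal to finite-dimensionality as keeping the straightening ``combinatorially controlled'' gestures at this but should be replaced by these precise statements. With those two corrections the proof is complete.
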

 \section{Convex subcomplexes of $K(P,w)$}

\begin{definition}
 Let $U,V$ be diagrams  in $\mathcal{V}(P,w)$ and let $W=U^{-1}V$.  Then
$K_{U,V}$ denotes the full subcomplex of $K(P,w)$ with vertices
$\{UH:H<W\}$. \end{definition}

One of the main tools in the proof of Theorem 1 is the following
\begin{theorem}\label{convex}
 Complex $K_{U,V}$ is a convex subcomplex of $K(P,w)$.\end{theorem}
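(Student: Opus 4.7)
The plan is to apply Theorem~\ref{wypuklosc} to $K' = K_{U,V}$ inside $K = K(P,w)$, which reduces the theorem to checking three conditions: $K_{U,V}$ is finite-dimensional, connected, and meets each cube of $K(P,w)$ in a cube or the empty set. The first two are quick. Since $W$ has only finitely many cells, its prefixes $H<W$ are in bijection with the down-closed subsets (ideals) of the cell poset $(W,\prec)$, so $K_{U,V}$ has finitely many vertices and is automatically finite-dimensional. For connectedness, any prefix $H<W$ is obtained from the empty prefix by attaching its cells one at a time in an order extending $\prec$, each intermediate stage being itself a prefix of $W$; the resulting sequence $U, UH_1, UH_2, \ldots, UH$ is an edge-path in $K_{U,V}$.

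The substantive content is the cube-intersection condition. Given a cube $C$ of $K(P,w)$, after choosing a base vertex we may describe it as $C = \{U_0\Phi_I : I \subseteq \{1,\ldots,k\}\}$, where $\Phi$ is a thin diagram with cells $c_1,\ldots,c_k$ and $top(\Phi)=bot(U_0)$. Assume $C \cap K_{U,V} \neq \emptyset$ and let $S = \{I : U_0\Phi_I \in K_{U,V}\}$. Because $K_{U,V}$ is a full subcomplex, $C \cap K_{U,V}$ is a cube (necessarily a face of $C$) precisely when $S$ is an order interval $[I_{\min},I_{\max}]$ in the Boolean lattice $2^{\{1,\ldots,k\}}$. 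I would fix some $I^* \in S$, write $U_0\Phi_{I^*} = UH^*$ with $H^*<W$, and let $J^*$ be the ideal of $(W,\prec)$ corresponding to $H^*$. Re-expressing the cube as operations on $UH^*$, each edge out of $U_0\Phi_{I^*}$ either appends a cell below $UH^*$ (coming from the cells $c_i$ with $i\notin I^*$) or removes a bottom cell of $H^*$ by cancellation (from the $c_j$ with $j\in I^*$). Under the prefix-ideal bijection each move changes $J^*$ by one element, so $S$ pulls back to a family of ideals of $(W,\prec)$ reachable from $J^*$ by such single-element moves. The interval property of $S$ would then follow from two lattice facts: unions and intersections of ideals are ideals, and any subset squeezed between two ideals by permissible single-element steps is again an ideal.

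The main obstacle I expect is the bookkeeping in this translation. The cells of $\Phi$ do not correspond to cells of $W$ in any canonical a-priori way --- the same cell $c_i$ may act by adding a new cell of $W$ for some $I$'s and by cancelling with a bottom cell of $H^*$ for others --- so one has to maintain a coherent correspondence as $I$ varies over $S$, and verify that the ideals obtained from $J^*$ by the restricted moves coming from $\Phi$ actually form an interval in the ideal lattice of $(W,\prec)$. Once that correspondence is pinned down, the interval structure of $S$, and with it convexity of $K_{U,V}$ via Theorem~\ref{wypuklosc}, both drop out.
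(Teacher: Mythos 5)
Your overall strategy matches the paper's: reduce to Theorem~\ref{wypuklosc} by checking connectedness, finite dimension, and that every cube meets $K_{U,V}$ in a face; the first two checks and the reduction of the third to an interval condition on the vertex set $S$ are fine. But the substantive step --- actually proving that $S$ is an order interval --- is not carried out. You base the cube at an arbitrary vertex $U_0\Phi_{I^*}=UH^*$ of $Q\cap K_{U,V}$, and then (as you yourself note) the cells of $\Phi$ split into those that attach new cells to $H^*$ and those that cancel bottom cells of $H^*$, with the correspondence between coordinates of the cube and cells of $W$ shifting as $I$ varies. The two ``lattice facts'' you invoke do not resolve this: to know, say, that $S$ is closed under union, you would already need one coherent labelling of the cube's coordinates by cells of $W$, which is exactly what the cancelling moves obstruct. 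So the argument stops where the real work begins.

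The paper removes the obstacle by a specific choice of base vertex: among all vertices $UH\in Q\cap K_{U,V}$, take one with $H$ minimal in the prefix order, and write $Q=C(UH,\Phi)$ from there using Lemma~\ref{anyvertex}(2). Minimality forces $H\circ\Phi$ to be reduced --- if a cell of $\Phi$ cancelled in $H\Phi$, the corresponding atomic prefix $\Phi_1$ would give a vertex $UH\Phi_1$ of $Q\cap K_{U,V}$ with a strictly smaller prefix. Hence every vertex of $Q$ has the form $U(H\circ\Psi)$ with $\Psi<\Phi$ and no cancellation, and such a vertex lies in $K_{U,V}$ exactly when $\Psi<L$, where $H\circ L=W$. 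The intersection is then precisely the face $C(UH,\Phi_0)$ with $\Phi_0=glb(L,\Phi)$, using the existence of greatest lower bounds from Lemma~\ref{anyvertex}(1). If you add this choice of base point and the reducedness argument, your outline closes up; without it, the interval property remains unproved.
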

\par\smallskip

We recall two results of Farley (see \cite{farley}, Proposition 2.2 and Lemma 2.4).

\begin{lemma} \label{anyvertex} \begin{enumerate} \item If $U,V$ are reduced diagrams with $top(U)=top(V)$ then there exists the greatest lower bound $W=glb(U,V)$ in the prefix order of diagrams.
\item  For any abstract cube  $C(U,\Phi)$ there is a reduced diagram $U_1$ (a vertex of $C(U,\Phi)$) and a thin diagram $\Psi$ such that $U_1\circ\Psi$ is reduced and  $C(U,\Phi)=C(U_1,\Psi)$. For every diagram $U_2\in C(U,\Phi)$  there exists a thin diagram $\Theta$ such that $C(U,\Phi)=C(U_2,\Theta)$.
\end{enumerate}
\end{lemma}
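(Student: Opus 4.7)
The plan is to exploit the cell-decomposition of reduced diagrams. The key observation, already implicit in the preliminaries, is that prefixes of a reduced diagram $W$ correspond bijectively to subsets of its cells that are downward-closed under the predecessor relation $\prec$; for a thin diagram $\Phi$ all cells are pairwise incomparable, so its prefixes correspond to arbitrary subsets of its cells.

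\emph{Part (1).} I would build $W=glb(U,V)$ generation by generation. Declare two first-generation cells --- one in $U$, one in $V$ --- to \emph{match} if they are the same cell $C(u,v)$ attached along the same sub-arc of $top(U)=top(V)=w$. Take the matched pairs as the first generation of $W$; since they sit on pairwise disjoint sub-arcs of $w$, they can be attached simultaneously. At stage $k+1$, the matched cells of earlier generations produce a common piece of the current bottom path inside both $U$ and $V$, and we repeat the matching on that piece. Finiteness of the cell sets of $U$ and $V$ forces termination at a common prefix $W$. Conversely, an induction on generations shows that any common prefix of $U$ and $V$ can use only matched cells, so it must be a prefix of $W$.

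\emph{Part (2).} For the first claim: if $U\circ\Phi$ is reduced, take $U_1=U$ and $\Psi=\Phi$. Otherwise some cell $C$ of $\Phi$ cancels against a bottom cell $D$ of $U$; since $\Phi$ is thin, $\{C\}$ is a prefix of $\Phi$, so $U'=U\cdot C$ is a vertex of $C(U,\Phi)$ obtained by cancelling the pair $(D,C)$, and $\Phi\setminus\{C\}$ is still thin with top matching $bot(U')$ along all sub-arcs where its remaining cells attach (only the sub-arc vacated by $D$ has been relabelled, and no surviving cell attaches there). Iterate; the cell count of the ambient thin diagram strictly decreases, so the procedure terminates at some $(U_1,\Psi)$ with $U_1\circ\Psi$ reduced and the vertex set preserved, $C(U_1,\Psi)=C(U,\Phi)$. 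For the second claim, given $U_2=U_1\cdot V$ with $V<\Psi$, partition the cells of the thin $\Psi$ into $V$ and $V'=\Psi\setminus V$, occupying disjoint sub-arcs of $bot(U_1)$; in $bot(U_2)$ the $V$-sub-arcs carry the relabelled bottoms of the cells of $V$, while the $V'$-sub-arcs are unchanged. Let $\Theta$ be the thin diagram on $bot(U_2)$ consisting of the inverse cells $c^{-1}$ at the locations of $c\in V$ together with the cells of $V'$ at their original locations. Reduced multiplication by $c^{-1}$ cancels $c$, so the map sending a subset $S$ of $\Theta$-cells to $(V\setminus\{c:c^{-1}\in S\})\cup(S\cap V')$ is a bijection between subsets of $\Theta$-cells and subsets of $\Psi$-cells; a direct check shows it identifies $C(U_2,\Theta)$ with $C(U_1,\Psi)=C(U,\Phi)$.

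The main obstacle in both parts is the planar bookkeeping of sub-arcs as cancellations are performed: when a cell is removed, its top and bottom sub-arcs get identified in the boundary, and one has to verify that the remaining cells of $\Phi$ still attach exactly where claimed (Part 2) and that matched cells of later generations really do align on a common sub-arc once the matched earlier generations are quotiented out (Part 1). Once these identifications of labelled paths are spelled out carefully, the rest of the argument is formal combinatorics on cell-subsets.
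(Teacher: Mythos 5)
The paper never proves this lemma itself---it is quoted from Farley (\cite{farley}, Proposition~2.2 and Lemma~2.4)---so your attempt can only be judged on its own correctness. Your Part~(1) is a reasonable sketch: building $glb(U,V)$ generation by generation by matching identically-labelled cells attached along the same sub-arc, and arguing by induction on generations that any common prefix consists of matched cells, is a sound plan, although the maximality half and the planar bookkeeping are only asserted, not carried out. Your proof of the second claim of Part~(2) (replacing the cells of $V$ by their inverse cells to re-base the cube at $U_2$) is correct.

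The first claim of Part~(2), however, contains a genuine error. When a cell $C$ of $\Phi$ cancels against a bottom cell $D$ of $U$, you pass from $(U,\Phi)$ to $(U\cdot C,\ \Phi\setminus\{C\})$ and claim the vertex set is preserved. It is not: $C(U,\Phi)$ has $2^k$ distinct vertices while $C(U\cdot C,\Phi\setminus\{C\})$ has $2^{k-1}$; you have produced a codimension-one face of the cube, and your iteration (whose stated termination measure is precisely that the cell count of the thin diagram strictly decreases) ends at a proper face of $C(U,\Phi)$, not at $C(U,\Phi)$ itself. The correct move is the one you already use in the second claim: do not delete $C$ but replace it by the inverse cell $C^{-1}$ attached along $\overline{D}\subset bot(U\cdot C)$. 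Concretely, let $\Phi_1<\Phi$ be the prefix consisting of all cells of $\Phi$ that cancel in $U\circ\Phi$, put $U_1=U\Phi_1$ and $\Psi=\Phi_1^{-1}\Phi$; then $C(U_1,\Psi)=C(U,\Phi)$ with the dimension preserved, and $U_1\circ\Psi$ is reduced because a cell $C^{-1}$ of $\Psi$ could only cancel against a cell $D'$ of $U_1$ with $\underline{D'}=\overline{D}$ and type opposite to $D$, which would force a cancelling pair $D',D$ inside the reduced diagram $U$. (Alternatively, $U_1$ can be characterized as the vertex of the cube minimal in the prefix order, which is how the paper handles the same issue inside the proof of Theorem~\ref{convex}.) With that repair your argument goes through; as written, it proves a strictly weaker statement.
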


{\bf Proof of Theorem \ref{convex}.}
 According to Theorem \ref{wypuklosc} it is enough to show that $ K_{U,V}$ is connected and that it satisfies property $(*)$. Every vertex in  $ K_{U,V}$ is represented by a diagram, which is obtained from $U$ by a sequence of attachments of cells and thus it is connected to $U$ by a path of edges.

 Let $Q$ be a $k$-dimensonal cube of  $K(P,w)$. Suppose  the intersection  of $Q$ and $ K_{U,V}$ is not empty. For any vertex $Y\in Q\cap K_{U,V}$ there is a prefix $H<W$ such that $Y=UH$. We may choose a vertex $Y\in Q\cap K_{U,V}$ for which the prefix $H$ is minimal.  By Lemma \ref{anyvertex} there exists a thin diagram $\Phi$ such that
 $Q=C(Y,\Phi)$. We shall prove that $H\circ\Phi$ is reduced.
Suppose the thin diagram $\Phi$ contains a cell which cancels in $H\Phi$. Let $\Phi_1$ be the atomic prefix of $\Phi$ corresponding to this cell. Then $H_1=H\Phi_1<H<W$ and $Y\Phi_1=UH\Phi_1=UH_1$ is a vertex of $Q$ which belongs to $K_{U,V}$ and corresponds to the prefix $H_1$ smaller than $H$. Contradiction. Therefore $H\circ\Phi$ is reduced.

Since $H<W$ there exists a reduced diagram $L$ such that $H\circ L=W$. Since $top(\Phi)=top(L)=bot(H)$ there exists $\Phi_0=glb(L,\Phi)$. Then the face $Q_1=C(Y,\Phi_0)$ of $Q$  is contained in $K_{U,V}$. We shall prove that $Q\cap K_{U,V}=Q_1$. Suppose some vertex $Y\circ\Psi$ of $Q$ belongs to $K_{U,V}$, where $\Psi<\Phi$. Then $Y\Psi=UH_1$ for some $H_1<W$. Since $Y=UH$ we have $H\Psi=H_1<W$ and since $H\circ \Psi$ is reduced we have $\Psi<L$.
Thus $\Psi$ is a lower bound of $L$ and $\Phi$ and therefore it is a prefix of $\Phi_0$ and $Y\circ\Psi\in Q_1$. Therfore $Q\cap K_{U,V}=Q_1$ is a face of $Q$.
This completes the proof of Theorem \ref{convex}.

\section {The action of $g$ on a cube $Q$}
In order to compare distances inside $K_{U,V}$ we shall embed it into an Euclidean cube.

\begin{lemma}\label{embedding} There exists an isometric cellular embedding $\rho$ of subcomplex $K_{U,V}$ into the complex $I^N$  (an $N$-dimensional unit cube) where $N$ is the number of cells in $W=U^{-1}V$ and the metric in the image $\rho(K_{U,V})$ is the intrinsic metric induced in the subcomplex from the standard metric in the cubes of $\rho(K_{U,V})$, faces of $I^N$. The length of a given PL-path in $\rho(K_{U,V})$ and in $I^N$ is the same.
Vertices $U$ and $V$ are mapped onto the opposite vertices of the cube $I^N$ so their distance in $K(P,w)$ is at least $\sqrt{N}$.
\end{lemma}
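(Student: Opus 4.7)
I would begin by labeling the cells of $W$ by $1, \ldots, N$ and exploiting the fact recorded after the definition of $\prec$: a prefix $H<W$ is determined by, and may be identified with, the downward-closed subset $S_H \subseteq \{1,\ldots,N\}$ of those cells of $W$ which it contains. I then define $\rho$ on the vertex set of $K_{U,V}$ by $\rho(UH) = \chi_{S_H} \in \{0,1\}^N \subseteq I^N$, the characteristic vector of $S_H$.

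Next I would extend $\rho$ cellularly over each cube. Given a cube $Q$ of $K_{U,V}$, Lemma \ref{anyvertex}(2) lets me choose a vertex $UH \in Q$ with $Q = C(UH,\Phi)$ and $H \circ \Phi$ reduced, exactly as in the proof of Theorem \ref{convex}. Writing $W = H \circ L$, the cells of $\Phi$ correspond to an antichain $T_\Phi \subseteq \{1,\ldots,N\} \setminus S_H$ of cells of $W$ whose $\prec$-predecessors all lie in $S_H$. Each vertex of $Q$ has the form $UH\Psi$ with $\Psi<\Phi$, and $S_{H\Psi} = S_H \sqcup T_\Psi$ for the subset $T_\Psi \subseteq T_\Phi$ corresponding to $\Psi$. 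I would then define $\rho$ on $Q$ to be the affine map sending $Q$ onto the face of $I^N$ where the coordinates outside $T_\Phi$ are frozen at $\chi_{S_H}$ and those in $T_\Phi$ vary freely over $[0,1]$.

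The main work is then to verify: (i) the image of $Q$ depends only on $Q$, not on the chosen base vertex, because the set of vertex images is determined by $Q$ itself; (ii) the restriction of $\rho$ to $Q$ is an isometry onto a unit Euclidean cube of the same dimension, which is immediate from the affine extension; and (iii) any PL-path in $\rho(K_{U,V})$ has the same length in the intrinsic metric (inherited from its unit-cube faces) as in $I^N$ (Euclidean), since each linear segment lies in a single cube and the standard metric on that cube coincides with its restriction from $I^N$. Combining these shows $\rho$ is an isometric cellular embedding that preserves PL-path length.

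Finally, the empty prefix corresponds to $U$, giving $\rho(U) = (0,\ldots,0)$, and the full prefix $H=W$ corresponds to $V=UW$, giving $\rho(V) = (1,\ldots,1)$, so $\rho(U)$ and $\rho(V)$ are antipodal corners of $I^N$. By Theorem \ref{convex} $K_{U,V}$ is convex in the CAT(0) space $K(P,w)$, so any $K(P,w)$-geodesic from $U$ to $V$ lies inside $K_{U,V}$; its length equals $d_{\rho(K_{U,V})}(\rho(U),\rho(V))$, which is at least the ambient Euclidean distance $\sqrt{N}$. The principal obstacle I anticipate is step (i) --- making the identification of the cells of $\Phi$ with cells of $W$ canonical across different choices of base vertex; this amounts to showing, via uniqueness of reduced forms, that two cubes of $K_{U,V}$ sharing a face must agree on the labels of the cells adjoined along that face.
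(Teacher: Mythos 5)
Your proposal is correct and follows essentially the same route as the paper: you define $\rho$ on vertices by the characteristic vector of the set of cells of $W$ contained in the prefix, extend affinely over each cube onto the corresponding face of $I^N$, and invoke the convexity of $K_{U,V}$ (Theorem \ref{convex}) to identify the intrinsic metric with the restricted metric and deduce $d(U,V)\ge\sqrt{N}$. The only organizational difference is that the paper defines $\rho$ globally on vertices first and extends linearly (so your well-definedness concern in step (i) does not arise), rather than working cube-by-cube from a chosen base vertex.
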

\begin{proof}
We shall describe the embedding. We enumerate the cells in $W$: $C_1,C_2,\dots,C_N$.
We map a vertex  $UH$ of $K_{U,V}$ onto a vertex $\rho(UH)$ of $I^N$ with coordinates  $x_i$, $i=1,2,\dots,N$ where 
$x_i=1$ if  the cell $C_i$ is contained in $H$  and $x_i=0$ if $C_i$ is not contained in $H$. 

We check now that this is a cellular map. Suppose $Q$ is a $k$-dimensional cube in $K_{U,V}$.
As in the proof of Theorem 2 there is a prefix $H<W$ and a thin diagram $\Phi$ such that $H\circ \Phi$ is reduced and $Q=C(UH,\Phi)$ and $H\circ\Phi<W$. Then the vertices of $\rho(Q)$ are exactly those vertices of $I^N$ for which $x_i=1$ for all coordinates corresponding to the cells of $H$ and $x_j=0$ for all coordinates corresponding to cells which lie neither in $H$ nor in $\Phi$ and the other coordinates are either 0 or 1. Thus the vertices of $\rho(Q)$ coincides with the vertices of a face of $I^N$ and $\rho$ is a cellular map on the vertices. It extends linearly to the interior points of cubical cells of $K_{U,V}$. 

Since $K_{U,V}$ is convex, the metric from $K(P,w)$ restricted to  $K_{U,V}$ is the intrinsic metric induced by the metric in the cubes and the same is true for the metric in the image of $K_{U,V}$ in $I^N$.  Thus the embedding is an isometry.
\end{proof}

\subsection{Cubes in $K(P,w)$}

In this subsection $g$ is a fixed element of the group $D(P,w)$ and $Q=C(U,\Phi)$ is a fixed $k$-dimensional cube in $K(P,w)$. 

 Diagram $\Phi$ has $k$ cells ordered from left to right: $E_1,E_2,\dots,E_k$. Let $\mathcal{K}$ denote the set of integers $\{1,2,\dots,k\}$.
For each subset $J\subset \mathcal{K}$ we let $J'=\mathcal{K}-J$. We denote by $\Phi_J$ the prefix of $\Phi$ containing exactly the cells $E_i$ for $i\in J$. 
Every prefix of $\Phi$ has this form. Diagram $\Phi_{J'}^{-1}\Phi_J$ is a thin diagram which contains $k$ cells, the cells $E_i$ for $i\in J$ and cells $E_i'$ for $i\in J'$, where $E_i'$ is a cell of the type opposite to $E_i$: if $E_i$ is of type $C(u_i,v_i)$ then $E_i'$ is of type $C(v_i,u_i)$. 

 Every vertex of $Q$ has the form $U\Phi_J$. By Lemma \ref{anyvertex} there is a thin diagram $\Theta$ such that $Q=C(U\Phi_J,\Theta)$ and in fact diagram $\Theta=\Phi_J^{-1}\Phi_{J'}$ has this property.\par\smallskip

We shall find vertices $U_0$ of $Q$ and $V_0$ of $gQ=C(gU,\Phi)$ such that both cubes $Q$ and $gQ$ are contained in the complex $K_{U_0,V_0}$. When we embed the complex $K_{U_0,V_0}$ in the cube $I^N$, as in Lemma \ref{embedding}, then the vertices $U_0$ and $V_0$ correspond to the opposite vertices of the cube $I^N$ one with all the coordinates equal to 0 and the other with all the coordinates equal to 1.

We now describe a particular subset $T\subset \mathcal K$. Let $W_0=U^{-1}gU$.
We have $top(W_0)=bot(U)=top(\Phi)$ so there exists
$\Phi_T=glb(W_0,\Phi)$. Diagram $\Phi_T$ contains exactly those cells $E_i$ of $W_0$ (or  of $\Phi$) which cancel in the concatenation $\Phi^{-1}W_0$. Then the diagram $W_1=\Phi_{T'}^{-1}\circ W_0$ is reduced and contains prefix $\Phi_{T'}^{-1}\Phi_{T}$. In particular it contains cells $E_i$ for $i\in T$ (these cells belong also to $W_0$) and the new cells $E_i'$ for $i\in T'$ (these cells are attached to $W_0$ along $\overline{W_0}$ from above). We let $U_0=U\Phi_{T'}$. Since $\Phi_{T'}<\Phi$ diagram $U_0$ is a vertex of $Q$. 

We now consider the largest prefix $\Phi_L$ of $\Phi$ which cancels in $W_1\Phi$. It can be also defined as $\Phi_L=glb(W_1^{-1},\Phi)$. The cell in $W_1$ which cancels with $E_i$ in $W_1\Phi$ will be called $F_i'$. It is of the type opposite to the type of $E_i$. There are two possibilities for a cell $F_i'$.
It may be a cell which belongs to $W_0$. It may even be one of the cells $E_j$ in $W_0$ so we may have $E_j=F_i'$. Or it may be a new cell $E_j'$ in $W_1$ for some $j\in T'$ if $E_j'$ was attached to $W_0$ along a bridge - an arc of $\overline{W_0}$ which is also contained in  $\underline{W_0}$. We let $V_0=gU \Phi_{L'}$ and $W=U_0^{-1}V_0=W_1\Phi_{L'}$. For $i\in L'$ the cell $E_i$ of $\Phi_{L'}$ attached to $W_1$ and considered as a cell of $W$ will be denoted by $F_i$. It is of the same type as $E_i$.

All cells $E_i$ and $E_i'$ are attached to $\overline{W}$ from below and all cells $F_i$ and $F_i'$ are attached to $\underline{W}$ from above.

Diagram $W_1\circ \Phi_{L'}=W$ is reduced. Diagram $\Phi_L$ cancels completely in $W_1\Phi_L$ therefore $W_2=W_1\Phi_L$ is a prefix of $W_1$, we have just removed from $W_1$ all cells $F_j'$ for $j\in L$. Now $W=W_2\circ(\Phi_L^{-1}\Phi_{L'})$
is reduced and contains the suffix $\Phi_L^{-1}\Phi_{L'}$.

Observe that $W$ contains the prefix $\Phi_{T'}^{-1}\Phi_{T}$ and therefore $K_{U_0,V_0}$ contains the cube $Q=C(U,\Phi)=C(U\Phi_{T'},\Phi_{T'}^{-1}\Phi_{T})=C(U_0,\Phi_{T'}^{-1}\Phi_{T})$.

Also $W=W_2\circ (\Phi_L^{-1}\Phi_{L'})$ is reduced, therefore for any prefix $\Theta$ of $\Phi_L^{-1}\Phi_{L'}$ the diagram $W_2\circ\Theta$ is a prefix of $W$. We also have  $gU=U_0U_0^{-1}gU=U_0W_1=U_0W_2\Phi_L^{-1}$.

 It follows that $K_{U_0,V_0}$ contains the cube 
$gQ=C(gU,\Phi)=C(U_0W_2\Phi_L^{-1},\Phi)=C(U_0W_2,\Phi_L^{-1}\Phi_{L'})$.

We have proven

\begin{lemma}\label{diagram for Q} Complex $K_{U_0,V_0}$ described above contains the cube $Q$ and contains its image $gQ$.\end{lemma}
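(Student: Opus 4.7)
The plan is to invoke the convexity of $K_{U_0,V_0}$ established in Theorem~\ref{convex}: a cube in a CAT(0) cube complex is the convex hull of its vertex set, and a convex subcomplex that contains all vertices of a cube must contain the whole cube. It therefore suffices to show every vertex of $Q$ and every vertex of $gQ$ is of the form $U_0 H$ with $H<W$, which by definition is exactly membership in $K_{U_0,V_0}$.

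For $Q$, I would first change the base vertex to $U_0 = U\Phi_{T'}$ using Lemma~\ref{anyvertex}(2). The associated thin diagram becomes $\Phi_{T'}^{-1}\Phi_T$, so an arbitrary vertex of $Q$ has the form $U_0\Psi$ with $\Psi<\Phi_{T'}^{-1}\Phi_T$. The key point is that $\Phi_{T'}^{-1}\Phi_T$ is itself a prefix of $W$: by construction $W_1=\Phi_{T'}^{-1}\circ W_0$ is reduced (thanks to $\Phi_T=\mathrm{glb}(W_0,\Phi)$ having absorbed all possible cancellations), and $W = W_1\Phi_{L'}$ extends $W_1$ by cells attached from below. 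Hence any $\Psi<\Phi_{T'}^{-1}\Phi_T$ is automatically a prefix of $W$, and every vertex of $Q$ lies in $K_{U_0,V_0}$.

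For $gQ$, the natural new base vertex is $U_0 W_2$. Using $gU = U_0 W_1 = U_0 W_2 \Phi_L^{-1}$ together with a second application of Lemma~\ref{anyvertex}(2), we rewrite $gQ = C(gU,\Phi) = C(U_0 W_2,\Phi_L^{-1}\Phi_{L'})$, whose vertices are the diagrams $U_0 W_2\Psi$ for $\Psi<\Phi_L^{-1}\Phi_{L'}$. Because the composition $W = W_2\circ(\Phi_L^{-1}\Phi_{L'})$ is reduced by construction, every $W_2\circ\Psi$ is a genuine reduced prefix of $W$; hence each vertex of $gQ$ has the required form $U_0 H$ with $H<W$.

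The delicate point in both halves is verifying that the two compositions $\Phi_{T'}^{-1}\circ W_0$ and $W_2\circ(\Phi_L^{-1}\Phi_{L'})$ are actually reduced, i.e.\ that no hidden cancellation survives. Both facts follow from the greatest-lower-bound characterizations: any residual cancellation in $W_1$ would produce a common prefix of $W_0$ and $\Phi$ strictly larger than $\Phi_T$, and similarly any cancellation in $W_2\circ(\Phi_L^{-1}\Phi_{L'})$ would enlarge $\Phi_L=\mathrm{glb}(W_1^{-1},\Phi)$, contradicting maximality in each case. Once these two reducedness statements are secured, the vertex containments are immediate and, combined with convexity of $K_{U_0,V_0}$, yield the cube containments $Q\subseteq K_{U_0,V_0}$ and $gQ\subseteq K_{U_0,V_0}$.
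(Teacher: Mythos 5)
Your proposal is correct and follows essentially the same route as the paper: it rewrites $Q=C(U_0,\Phi_{T'}^{-1}\Phi_T)$ and $gQ=C(U_0W_2,\Phi_L^{-1}\Phi_{L'})$ and checks that every vertex is $U_0H$ with $H<W$, using the reducedness of $\Phi_{T'}^{-1}\circ W_0$ and $W_2\circ(\Phi_L^{-1}\Phi_{L'})$ guaranteed by the glb constructions. The only cosmetic difference is that you invoke convexity to pass from vertices to the whole cube, whereas the paper's definition of $K_{U_0,V_0}$ as a \emph{full} subcomplex already gives this directly.
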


\subsection {Action of $g$ on a cube $Q$}
For a given cube $Q$ of dimension $k$ in $K(P,w)$ we may consider the subcomplex $K_{U_0,V_0}$ containing $Q$ and $gQ$, as in the previous lemma and we may consider the embedding of $K_{U_0,V_0}$ into the cube $I^N$. The action of $g$ restricted to $Q$ translates into the action on the coordinates in the cube $I^N$. Coordinates in $I^N$ correspond to cells $C_i$ contained in $W$. The coordinate corresponding to $C_i$ will be called $c_i$ so we shall say $c_i$-coordinate instead of $i$-th coordinate. We may enumerate the cells in such a way that for $i=1,\dots,k$ the coordinate $c_i$ corresponds to the cell $E_i$ if $i\in T$ and corresponds to the cell $E_i'$ if $i\in T'$. The next lemma describes the action of $g$ in $I^N$ restricted to $Q$. We use the notation from Subsection 4.1.

\begin{lemma}\label{actioncube} Let $Q$ be a $k$-dimensional cube in $K(P,w)$, let $K_{U_0,V_0}$ be the subcomplex of $K(P,w)$ containing $Q$ and $gQ$ as in Subsection 4.1. Consider the embedding of $K_{U_0,V_0}$ in $I^N$. Let $y$ be a point in $Q\subset I^N$ with coordinates $c_i=y_i\in[0,1]$ for $i=1,\dots,k$ and $c_i=0$ for $i>k$. We consider the induced action of $g$ in $Q\subset I^N$. Then $z=g(y)$ has coordinates $c_i=z_i$ where\begin{enumerate}
\item $z_i=1-y_j$ if $C_i=F_j'$ for some $j\in L\cap T$
\item $z_i=0+y_j$ if $C_i=F_j'$ for some $j\in L\cap T'$
\item  $z_i=1$ if $C_i$ is not equal to any $F_j$ or $F_j'$.
\item  $z_i=0+y_j$ if $C_i=F_j$ for some $j\in L'\cap T$
\item $z_i=1-y_j$ if $C_i=F_j$ for some $j\in L'\cap T'$

\end{enumerate}
\end{lemma}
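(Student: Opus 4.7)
My plan is to verify the formulas at the vertices of $Q$ and then extend them by affinity to the whole cube. Since $g$ acts by cellular isometries, $g|_Q\colon Q\to gQ$ is an affine map between cubes, and the embedding $\rho$ of Lemma~\ref{embedding} is cellular and isometric on each face of $K_{U_0,V_0}$; consequently $\rho\circ g\circ\rho^{-1}\colon \rho(Q)\to\rho(gQ)$ is an affine map between faces of $I^N$, and is determined by its values on the $2^k$ vertices of $\rho(Q)$.

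First I would read off the $I^N$-coordinates of a vertex $\rho(U\Phi_J)$, for $J\subset\mathcal{K}$. Since $U_0=U\Phi_{T'}$, the corresponding prefix of $W$ is (the reduction of) the thin diagram $\Phi_{T'}^{-1}\Phi_J$: the cells $E_j$ for $j\in T'\cap J$ cancel against the corresponding $E_j'$ from $\Phi_{T'}^{-1}$, leaving $E_i$ for $i\in T\cap J$ together with $E_i'$ for $i\in T'\setminus J$. Via the enumeration convention $C_i=E_i$ when $i\in T$ and $C_i=E_i'$ when $i\in T'$, this gives: at the vertex $\rho(U\Phi_J)$, the coordinate $y_j$ equals $1$ precisely when either $j\in T\cap J$ or $j\in T'\setminus J$.

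Next I would compute the image $gU\Phi_J$. We have $U_0^{-1}(gU\Phi_J)=W_1\Phi_J$; and because $\Phi_L=glb(W_1^{-1},\Phi)$ is the maximal prefix of $\Phi$ that cancels with $W_1$, the cancellations in $W_1\Phi_J$ are exactly the pairs $(E_j,F_j')$ for $j\in J\cap L$. Hence the prefix $H=W_1\Phi_J$ of $W$ contains every cell of $W_1$ except the $F_j'$ with $j\in J\cap L$, together with the cells $F_j$ for $j\in J\cap L'$. Reading off $z_i$ for each cell $C_i$ of $W$ then yields three cases: if $C_i=F_j'$ with $j\in L$, then $z_i=1$ iff $j\notin J$; if $C_i=F_j$ with $j\in L'$, then $z_i=1$ iff $j\in J$; and if $C_i$ is neither, then $C_i\in H$ regardless of $J$, so $z_i=1$. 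Splitting the first two cases according to whether $j\in T$ or $j\in T'$ and substituting the vertex-level formulas for $y_j$ from the previous paragraph yields the five cases of the lemma; affinity then extends the formulas throughout $Q$.

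The main obstacle is the bookkeeping: a single cell of $W$ can carry several names, being for instance one of the $E_i$ or $E_i'$ that index the $Q$-coordinates and simultaneously an $F_j$ or $F_j'$ arising from the cancellation analysis. I would need to verify that the case distinction is unambiguous --- each cell of $W$ lands in exactly one branch of (1)--(5) --- and that the ``ordinary'' cells of $W_1$, which appear in every $H$ and hence contribute the constant value $z_i=1$, are precisely those not of the form $F_j$ or $F_j'$, as required by case~(3).
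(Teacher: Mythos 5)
Your proposal is correct and follows essentially the same route as the paper: both arguments reduce to identifying, for a vertex of $Q$, which cells of $W$ lie in the prefix corresponding to its $g$-image (via $gU\Phi_J=U_0W_1\Phi_J$ and the cancellation pattern governed by $L$ and $T$), and then extend by affinity. The paper organizes the computation around $gP_0$ and the differences $gP_j-gP_0$ rather than all $2^k$ vertices, but the content is the same.
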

\begin{proof} Let $P_0$ denote the origin in $I^N$ (the point with all coordinates equal to 0) and let $P_i$ denote the vertex of $I^N$ corresponding to the cell $C_i$ (the coordinate  $c_i$).
The action of $g$ in $I^N$ is affine, it does not preserve the origin, so we describe a point $y$ as a combination 
$$y=P_0+\Sigma_{j=1}^k y_j(P_j-P_0).$$
Then
$$gy=gP_0+\Sigma_{j=1}^k y_j(gP_j-gP_0).$$

Each cell $C_i$ of $W$  corresponds to exactly one of the cases (1) - (5) in the Lemma.

We compute first the coordinates of the point $gP_0$. It corresponds to the diagram $gU_0=U_0U_0^{-1}gU_0=U_0W_1\Phi_{T'}$ and to the prefix $W_1\Phi_{T'}$ of $W$. We start with all the cells of $W_1$. These are the cells $C_i$ not equal to any $F_j$ for $j\in L'$, cases (1), (2) and (3). Next we attach the cells of $\Phi_{T'}$. We
 attach the cells $F_j$ for $j\in L'\cap T'$ (case (5))  and we remove the cells $F_j'$ for $j\in L\cap T'$ (case (2)). The cells in $W_1\Phi_{T'}$ correspond to coordinates of $gP_0$ which are equal 1. The other coordinates of $gP_0$ are equal 0. So the coordinate $c_i$ of $gP_0$ is equal 1 in cases (1), (3), (5) and is equal 0 in cases (2) and (4).

We consider now $gP_j$ for $j=1,2,\dots,k$. Vertex $P_j$ corresponds to the diagram obtained from $U_0$ by the attachment of one cell of $\Phi_{T'}^{-1}\Phi_T$. Therefore $gP_j$ corresponds to the diagram obtained from $gU_0$ in the same way. The difference $gP_j-gP_0$ has only one coordinate
different from 0 which corresponds to the attached cell. We start with $W_1\Phi_{T'}$ and attach one cell: $F_j$ if $j\in T$ and $F_j'$ if $j\in T'$.

If $j\in L'\cap T$ we attach the cell $F_j$ and for $C_i=F_j$ the coordinate $c_i$ in $gy$ is increased by $y_j$. We are in case (4).

If $j\in L\cap T$ we attach the cell $F_j$. The cell $F_j'$ was contained in $W_1$, so "we now remove a part of it". For $C_i=F_j'$ the coordinate $c_i$ in $gy$ is decreased by $y_j$. We are in case (1).

 If $j\in L'\cap T'$ we attach the cell $F_j'$. The cell $F_j$ was attached earlier to $W_1$ in the construction of $W_1\Phi_{T'}$, so "we now remove a part of it". For $C_i=F_j$ the coordinate $c_i$ in $gy$ decreases by $y_j$. We are in case (5).

If $j\in L\cap T'$ we attach the cell $F_j'$. Since $j\in L$ this cell existed in $W_1$, it was canceled when we attached $\Phi_{T'}$ and it is now added again. For $C_i=F_j'$ the coordinate $c_i$ in $gy$ increases by $y_j$. We are in case (2).
\end{proof}

\section{Considerations in R. Thompson's group $F$}

We restrict now our attention to the R. Thompson's group $F$ represented as a diagram group $D(P,x)$ with $P=<x|x=xx>$. 
 In the case of the semi-group $P$ the group $D(P,w)$ is independent of the word $w$. We shall prove it for a particular word $w=xxx$ which we shall use. The diagrams in $K(P,x)$ have the top path with one edge labeled $x$. If we attach a cell of type $C(xx,x)$ from above and then again a cell $C(xx,x)$ from above along the first edge of the new top path we get a diagram with top word $xxx$. This defines an isometry of $K(P,x)$ onto $K(P,w)$. If we attach the two cells from above to the diagrams in $D(P,x)$ and in a symmetric way two cells from below we get an isomorphism of the group $D(P,x)$ onto the group $D(P,w)$ which is equivariant with respect to the group actions on the respective cube complexes. The element $g$ represented by the diagram on the left side of Figure \ref{diagramg} maps onto the element represented on the right side, which looks simpler and contains only 4 cells. Because of this we shall consider further only the group $D(P,w)$, where $w=xxx$, and we shall denote by $g$ the element represented by the diagram on the right side of Figure \ref{diagramg}.

\subsection{Bounding $|g|$ from below}
\begin{lemma} If $U$ is a diagram in $K(P,w)$ then $W=U^{-1}gU$ has at least 4 cells.
\end{lemma}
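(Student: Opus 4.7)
My plan is a proof by contradiction. Since $W$ is spherical with $\mathrm{top}(W)=\mathrm{bot}(W)$, the numbers of upper cells $C(x,xx)$ and of lower cells $C(xx,x)$ in $W$ are equal; hence if $W$ has fewer than $4$ cells it has exactly $0$ or $2$ cells.

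The case of $0$ cells is immediate: $W=1$ forces $u^{-1}gu=1$ in the group and so $g=1$, contradicting the fact that $g$ is represented by the nontrivial reduced $4$-cell diagram on the right side of Figure~\ref{diagramg}.

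The substantive case is $|W|=2$, namely one cell of each type. To rule this out I would use a conjugation invariant of $g$. Set $n=|\mathrm{bot}(U)|$ and apply the canonical group isomorphism $D(P,x^n)\cong D(P,x)\cong F$ (Thompson's group $F$). Under this isomorphism $g$ corresponds to a PL homeomorphism $\hat g$ of $[0,1]$, which one computes directly from the $8$-cell reduced diagram on the left of Figure~\ref{diagramg} (its reduced tree pair has $5$ leaves and yields a specific sequence of dyadic slopes, in particular the value $4 = 2^2$ on some subinterval). The chain rule $(\hat u^{-1}\hat g \hat u)'(x)= \hat g'(\hat u(x))$ then implies that every conjugate of $\hat g$ in $F$ also has $4$ in the range of its derivative. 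One then verifies, by a case analysis on the reduced tree pairs arising from $2$-cell spherical diagrams in $D(P,x^n)$, that no such diagram yields a PL map whose derivative attains $4$ compatibly with $\hat g$'s dynamics.

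The main obstacle is carrying out this case analysis uniformly in $n$: for large $n$ the set of $2$-cell elements in $D(P,x^n)$ grows and their iso-images in $F$ accommodate an increasing variety of slope patterns, so one must control the tree pair structure carefully in each sub-family. A cleaner alternative, which I would also try, is a purely combinatorial argument exploiting the structural asymmetry of $g$ (its two upper cells lie above the leftmost letter of $\mathrm{top}(g)$, its two lower cells lie above the rightmost letter of $\mathrm{bot}(g)$): one tracks this left--right asymmetry through the cancellations in $U^{-1}\circ g\circ U$ and shows that, no matter how cleverly $U$ is chosen, the $4$-cell ``core'' of $g$ survives reduction. Formalizing either route is the delicate technical step.
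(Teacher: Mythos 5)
Your opening reduction is correct and worth keeping: over $\langle x\mid x=x^2\rangle$ every cell changes the word length by $\pm 1$, so a spherical diagram has equally many upper and lower cells, hence an even cell count, and the $0$-cell case is trivially excluded because $g\neq 1$. So the lemma does reduce to showing that no reduced conjugate $U^{-1}gU$ has exactly two cells. But the argument you offer for that core case has a genuine gap. The chain rule for $h=\hat u^{-1}\circ\hat g\circ\hat u$ is $h'(x)=(\hat u^{-1})'(\hat g(\hat u(x)))\cdot \hat g'(\hat u(x))\cdot \hat u'(x)$, not $\hat g'(\hat u(x))$; the two extra factors cancel only at points $x$ with $\hat g(\hat u(x))=\hat u(x)$ (e.g.\ at the endpoints $0$ and $1$). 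Consequently the \emph{range} of the derivative is not a conjugation invariant in $F$ --- only the germs (in particular the slopes) at fixed points are --- so ``every conjugate of $\hat g$ has $4$ in the range of its derivative'' does not follow and is not a usable invariant. The invariants that do survive conjugation ($\log_2$ of the slopes at $0$ and $1$, i.e.\ the image in the abelianization $\mathbb{Z}^2$) are not shown to separate $g$ from all $2$-cell spherical diagrams, and your second, ``purely combinatorial'' route is only described, not carried out; you yourself flag the case analysis as the unresolved step. There is also an unaddressed bookkeeping issue: $W$ lives in $D(P,x^n)$ for varying $n=|bot(U)|$, so one must say precisely in what sense a $2$-cell diagram there is ``conjugate to $g$'' before any invariant can be applied.

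For comparison, the paper avoids invariants altogether and proves a stronger statement by induction on the number of cells of $U$: every reduced conjugate $U^{-1}gU$ with at most $4$ cells has exactly $4$ cells and is one of the eight explicit shapes in Figure~\ref{4cells}. The base case is $g$ itself (Figure~\ref{diagramg}, right), and the inductive step peels off a first-generation cell $C_2$ of $U$, writing $U=V\circ U_1$ with $V$ atomic, and checks by hand that conjugating any of the eight shapes by a single cell (attached on top with its inverse on the bottom) either leaves the diagram unchanged or produces another of the eight shapes; a short argument shows that if any cell of $W$ is to cancel at all, it must cancel with such a first-generation cell. Strengthening the induction hypothesis to a finite list of shapes is exactly what makes the induction close, and is the idea missing from your sketch.
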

\begin{proof} We shall prove that if $W=U^{-1}gU$ has at most 4 cells then it has exactly 4 cells and it has one of the forms on Figure \ref{4cells}.

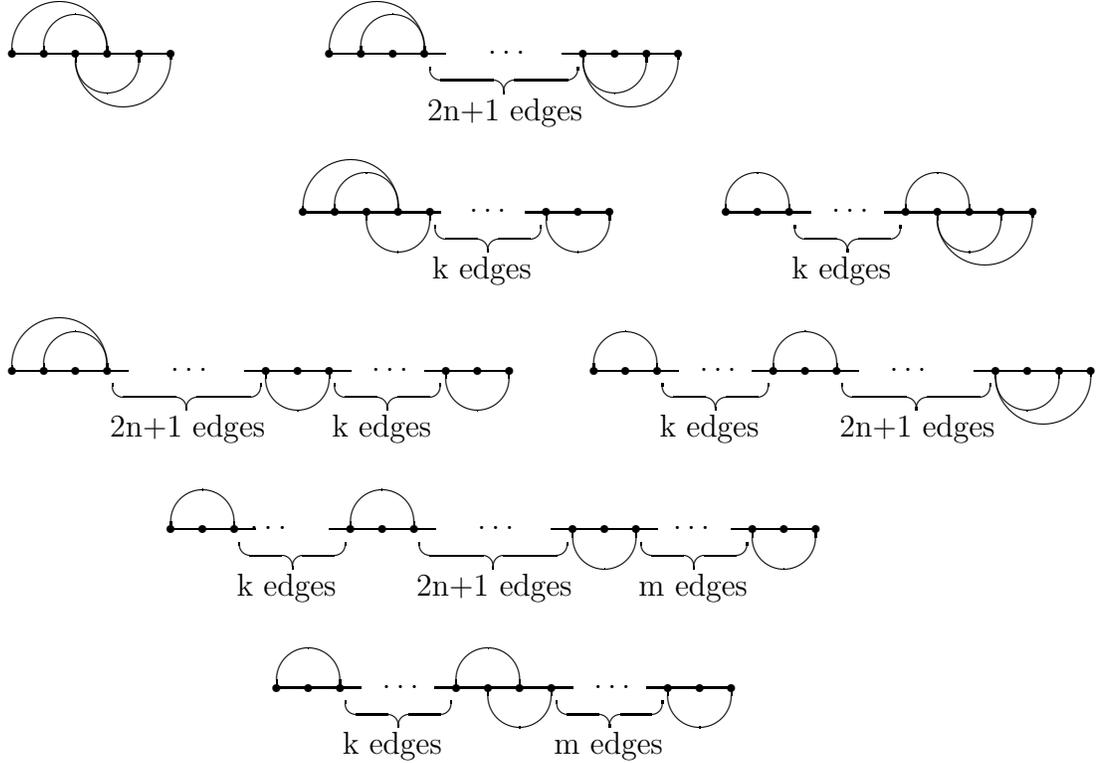
\begin{figure}
\centering
\begin{picture}(400,400)(20,-180)

\put(10,120){\line(1,0){60}}   
 \put(10,120){\circle*{3}}  \put(22,120){\circle*{3}}
 \put(34,120){\circle*{3}}  \put(46,120){\circle*{3}}
 \put(58,120){\circle*{3}}  \put(70,120){\circle*{3}}

\put(28,120){\oval(36,40)[t]} \put(34,120){\oval(24,30)[t]}

 \put(46,120){\oval(24,30)[b]}   
\put(52,120){\oval(36,40)[b]}

\put(130,120){\line(1,0){44}}   \put(218,120){\line(1,0){44}} 
 \put(130,120){\circle*{3}}  \put(142,120){\circle*{3}}
 \put(154,120){\circle*{3}}  \put(166,120){\circle*{3}}
 \put(226,120){\circle*{3}}  \put(238,120){\circle*{3}}
 \put(250,120){\circle*{3}}  \put(262,120){\circle*{3}}

\put(148,120){\oval(36,40)[t]} \put(154,120){\oval(24,30)[t]}

 \put(238,120){\oval(24,30)[b]}   
\put(244,120){\oval(36,40)[b]}
 \put(190,120){$\dots$} 

\put(182,115){\oval(28,10)[lb]}  \put(182,105){\oval(28,10)[rt]}

\put(210,105){\oval(28,10)[lt]} \put(210,115){\oval(28,10)[rb]} 

\put(167,95){2n+1 edges}

\put(120,60){\line(1,0){52}}   \put(204,60){\line(1,0){32}}   
 \put(120,60){\circle*{3}}  \put(132,60){\circle*{3}}
 \put(144,60){\circle*{3}}  \put(156,60){\circle*{3}}
 \put(168,60){\circle*{3}}  \put(212,60){\circle*{3}}
 \put(224,60){\circle*{3}}  \put(236,60){\circle*{3}}
 \put(183,60){$\dots$} 

\put(138,60){\oval(36,40)[t]} \put(144,60){\oval(24,30)[t]}

 \put(156,60){\oval(24,30)[b]}
\put(224,60){\oval(24,30)[b]}
\put(180,55){\oval(20,10)[lb]}  \put(180,45){\oval(20,10)[rt]}

\put(200,45){\oval(20,10)[lt]} \put(200,55){\oval(20,10)[rb]} 

\put(169,35){k edges}

\put(280,60){\line(1,0){32}}   \put(340,60){\line(1,0){56}}   
 \put(280,60){\circle*{3}}  \put(292,60){\circle*{3}}
 \put(304,60){\circle*{3}}  \put(348,60){\circle*{3}}
 \put(360,60){\circle*{3}}  \put(372,60){\circle*{3}}
 \put(384,60){\circle*{3}}  \put(396,60){\circle*{3}}
 \put(320,60){$\dots$} 

\put(292,60){\oval(24,30)[t]} \put(360,60){\oval(24,30)[t]}

 \put(372,60){\oval(24,30)[b]}
\put(378,60){\oval(36,40)[b]}
\put(316,55){\oval(20,10)[lb]}  \put(316,45){\oval(20,10)[rt]}

\put(336,45){\oval(20,10)[lt]} \put(336,55){\oval(20,10)[rb]} 

\put(305,35){k edges}

\put(10,0){\line(1,0){44}}   \put(98,0){\line(1,0){40}}   
\put(166,0){\line(1,0){32}}   
 \put(10,0){\circle*{3}}  \put(22,0){\circle*{3}}
 \put(34,0){\circle*{3}}  \put(46,0){\circle*{3}}
 \put(106,0){\circle*{3}}  \put(118,0){\circle*{3}}
 \put(130,0){\circle*{3}}  \put(174,0){\circle*{3}}
 \put(186,0){\circle*{3}}  \put(198,0){\circle*{3}}

\put(28,0){\oval(36,40)[t]} \put(34,0){\oval(24,30)[t]}

 \put(118,0){\oval(24,30)[b]}
\put(186,0){\oval(24,30)[b]}

 \put(70,0){$\dots$}   \put(146,0){$\dots$} 
\put(62,-5){\oval(28,10)[lb]}  \put(62,-15){\oval(28,10)[rt]}

\put(90,-15){\oval(28,10)[lt]} \put(90,-5){\oval(28,10)[rb]} 

\put(47,-25){2n+1 edges}

\put(142,-5){\oval(20,10)[lb]}  \put(142,-15){\oval(20,10)[rt]}

\put(162,-15){\oval(20,10)[lt]} \put(162,-5){\oval(20,10)[rb]} 

\put(131,-25){k edges}

\put(230,0){\line(1,0){32}}   \put(290,0){\line(1,0){40}}   
\put(374,0){\line(1,0){44}}   
 \put(230,0){\circle*{3}}  \put(242,0){\circle*{3}}
 \put(254,0){\circle*{3}}  \put(298,0){\circle*{3}}
 \put(310,0){\circle*{3}}  \put(322,0){\circle*{3}}
 \put(382,0){\circle*{3}}  \put(394,0){\circle*{3}}
 \put(406,0){\circle*{3}}  \put(418,0){\circle*{3}}

\put(242,0){\oval(24,30)[t]} \put(310,0){\oval(24,30)[t]}

 \put(400,0){\oval(36,40)[b]}
\put(394,0){\oval(24,30)[b]}

 \put(270,0){$\dots$}   \put(342,0){$\dots$} 
\put(338,-5){\oval(28,10)[lb]}  \put(338,-15){\oval(28,10)[rt]}

\put(366,-15){\oval(28,10)[lt]} \put(366,-5){\oval(28,10)[rb]} 

\put(323,-25){2n+1 edges}

\put(266,-5){\oval(20,10)[lb]}  \put(266,-15){\oval(20,10)[rt]}

\put(286,-15){\oval(20,10)[lt]} \put(286,-5){\oval(20,10)[rb]} 

\put(255,-25){k edges}

\put(110,-120){\line(1,0){32}}   \put(170,-120){\line(1,0){52}} 
    \put(250,-120){\line(1,0){32}}   
 \put(110,-120){\circle*{3}}  \put(122,-120){\circle*{3}}
 \put(134,-120){\circle*{3}}  \put(178,-120){\circle*{3}}
 \put(190,-120){\circle*{3}}  \put(202,-120){\circle*{3}}
 \put(214,-120){\circle*{3}}  \put(258,-120){\circle*{3}}
 \put(270,-120){\circle*{3}}  \put(282,-120){\circle*{3}}
 \put(150,-120){$\dots$}    \put(230,-120){$\dots$} 

\put(122,-120){\oval(24,30)[t]} \put(190,-120){\oval(24,30)[t]}

 \put(202,-120){\oval(24,30)[b]}
\put(270,-120){\oval(24,30)[b]}
\put(146,-125){\oval(20,10)[lb]}  \put(146,-135){\oval(20,10)[rt]}

\put(166,-135){\oval(20,10)[lt]} \put(166,-125){\oval(20,10)[rb]} 

\put(135,-145){k edges}

\put(226,-125){\oval(20,10)[lb]}  \put(226,-135){\oval(20,10)[rt]}

\put(246,-135){\oval(20,10)[lt]} \put(246,-125){\oval(20,10)[rb]} 

\put(215,-145){m edges}

\put(70,-60){\line(1,0){32}}   \put(130,-60){\line(1,0){40}}   
\put(214,-60){\line(1,0){40}}    \put(282,-60){\line(1,0){32}}   
 \put(70,-60){\circle*{3}}  \put(82,-60){\circle*{3}}
 \put(94,-60){\circle*{3}}  \put(138,-60){\circle*{3}}
 \put(162,-60){\circle*{3}}  \put(222,-60){\circle*{3}}
 \put(150,-60){\circle*{3}}  \put(234,-60){\circle*{3}}
 \put(246,-60){\circle*{3}}  \put(290,-60){\circle*{3}}
 \put(302,-60){\circle*{3}}  \put(314,-60){\circle*{3}}

\put(82,-60){\oval(24,30)[t]} \put(150,-60){\oval(24,30)[t]}

 \put(234,-60){\oval(24,30)[b]}
\put(302,-60){\oval(24,30)[b]}

 \put(100,-60){$\dots$}   \put(186,-60){$\dots$}   \put(260,-60){$\dots$}

\put(106,-65){\oval(20,10)[lb]}  \put(106,-75){\oval(20,10)[rt]}

\put(126,-75){\oval(20,10)[lt]} \put(126,-65){\oval(20,10)[rb]} 

\put(95,-85){k edges}

\put(178,-65){\oval(28,10)[lb]}  \put(178,-75){\oval(28,10)[rt]}

\put(206,-75){\oval(28,10)[lt]} \put(206,-65){\oval(28,10)[rb]} 

\put(163,-85){2n+1 edges}

\put(258,-65){\oval(20,10)[lb]}  \put(258,-75){\oval(20,10)[rt]}

\put(278,-75){\oval(20,10)[lt]} \put(278,-65){\oval(20,10)[rb]} 

\put(247,-85){m edges}

\end{picture}
\caption{Possible conjugates of $g$ with 4 cells.}\label{4cells}
\end{figure}

Suppose that we have one of the diagrams on Figure \ref{4cells} and we attach a cell from above along the top path of the diagram and a cell of the opposite type from below, along the corresponding edges of the bottom path of the diagram  in such a way that one of the cells cancels. There are at most eight possibilities in each of the  diagrams. It is easy to check that in each case we get again one of the 8 types of diagrams, up to an isotopy. Now let $W$ be any of the diagrams on Figure \ref{4cells}. Let $U$ be a diagram with $top(U)=bot(W)$. We claim that if $U^{-1}WU$ has at most 4 cells then it has exactly 4 cells and has one of the forms on Figure \ref{4cells}. We argue by induction on the number of cells in $U$. We have just seen that it is true when $U$ has only one cell. In general if $U^{-1}WU$ has at most 4 cells then either we are left with the diagram $W$ or one of the cells, call it $C_1$, of $W$ cancels in $U^{-1}WU$. We may assume by symmetry that eventually  $C_1$ cancels with a cell below it. There must be a cell $C_2$ of $U$ such that $\overline C_2$ and $\underline C_1$ have an edge in common. If $C_1$ and $C_2$ cancel then $C_2$ is a cell of the first generation in $U$. Then $U=C_2\circ U_1$. More precisely we have an atomic diagram $V$ with one cell $C_2$ and with $top(V)=bot(W)$, and $U=V\circ U_1$. We may start with $V^{-1}WV$, get some $W_1$ as on Figure \ref{4cells} and then $U_1^{-1}W_1U_1$ is again of such form by induction on the number of cells in $U$. If $C_2$ does not cancel with $C_1$ but $C_1$ cancels eventually then there must be a cell $C_3$ in $U$ which touches $C_2$ from below (has a common edge with $C_2$) and so on. These cells may not cancel between themselves because we may assume that $U$ is reduced. But we cannot have an infinite sequence of cells in $U$. Therefore $C_1$ must cancel with $C_2$ and the claim follows by induction. This proves the lemma.
\end{proof}
\begin{proposition}\label{distance} If $y$ is a point in $K(P,w)$ then  the distance $d(y,gy)$ in $K(P,w)$ is bigger than $\sqrt{2}$. If $y$ is a vertex then $d(y,gy)$ is at least 2 and if $y$ is the midpoint of a cube then $d(y,gy)$ is at least $\sqrt{2.25}$.
\end{proposition}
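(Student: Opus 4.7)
The vertex bound is immediate from the previous lemma and Lemma~\ref{embedding}: the reduced diagram $W=U^{-1}gU$ has at least $4$ cells, and Lemma~\ref{embedding} embeds $K_{U,gU}$ isometrically into $I^{|W|}$ with $U$ and $gU$ at opposite corners, so $d(U,gU)\ge\sqrt{|W|}\ge 2$.

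For a general point $y\in K(P,w)$, the plan is to pick any cube $Q$ of $K(P,w)$ containing $y$, apply Lemma~\ref{diagram for Q} to obtain a subcomplex $K_{U_0,V_0}$ containing $Q\cup gQ$, and embed it isometrically into $I^N$ via Lemma~\ref{embedding}, where $N=|U_0^{-1}V_0|$. Under this identification the restriction of $g$ to $Q\subset I^N$ becomes the affine map of Lemma~\ref{actioncube}, and hence
\[
d(y,gy)^2=\sum_{i=1}^{N}(z_i-y_i)^2,
\]
where $y_i\in[0,1]$ for the $k$ cube coordinates $i\le k$ and $y_i=0$ for the remaining $N-k$ pinned coordinates. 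I would then bound this sum from below by partitioning the $N$ summands according to the five cases of Lemma~\ref{actioncube} and, within each case, distinguishing whether $i\le k$ or $i>k$. The crucial numerical input is the four-cell bound from the previous lemma applied at the vertex $U_0$, which forces $|U_0^{-1}gU_0|\ge 4$ and thereby controls the number of non-cancelling terms in the sum.

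In the midpoint situation $y_i=\tfrac12$ the algebraic form of the five cases makes every contribution from cases $(1),(5)$ with $i\le k$ and from cases $(2),(4)$ with $i\le k$ vanish, while every contribution with $i>k$ coming from the paired cases is $(\tfrac12)^2=\tfrac14$, case $(3)$ with $i\le k$ contributes $(1-\tfrac12)^2=\tfrac14$, and case $(3)$ with $i>k$ contributes $1$. Combining these weights with the four-cell count of \(U_0^{-1}gU_0\) yields $\|y-gy\|^2\ge\tfrac94$. For the general bound $\|y-gy\|^2>2$ the same tally, now with $y_i$ kept variable, is used: whenever a paired case $(1),(2),(4)$ or $(5)$ with $i\le k$ degenerates (because some $y_i+y_j=1$ or $y_i=y_j$), its companion term in case $(3)$ or with $i>k$ contributes a strictly positive amount that keeps the total strictly above $2$.

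The main technical obstacle will be the combinatorial bookkeeping that matches the coordinate index $i$ of a cell $C_i$ of $W$ against its role in Lemma~\ref{actioncube}, and in particular handles coincidences of the form $E_i=F_j$ or $E_i'=F_j'$ between the $k$ cube-cells and the ``action cells''. In the tightest configurations one may have to appeal to the explicit list of four-cell conjugates of $g$ classified in Figure~\ref{4cells} in order to rule out sums that would otherwise exactly attain, rather than exceed, the stated lower bounds.
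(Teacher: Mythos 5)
Your vertex bound is exactly the paper's, and your general setup (choose $Q\ni y$, pass to $K_{U_0,V_0}$, embed in $I^N$, use Lemma~\ref{actioncube}) is also the right starting point. But the core of your argument --- bounding $d(y,gy)^2$ below by the straight-line Euclidean quantity $\sum_{i=1}^N(z_i-y_i)^2$ and then tallying the five cases --- cannot reach the stated constants. The inequality $d(y,gy)\ge\lVert y-gy\rVert_{I^N}$ is valid, but $\lVert y-gy\rVert_{I^N}^2$ is simply not always $>2$. Concretely, in the configuration where the only cell of $W_0$ meeting the left end is $E_1$ (so the distinguished cells are $C_1\prec C_p$ with $C_p=F_1$), those two coordinates contribute $(1-y_1)^2+y_1^2$, which is only $\ge\tfrac12$ and equals $\tfrac12$ at a midpoint; with the symmetric pair at the right end you get a Euclidean contribution of $1$, not $2$, from the four distinguished coordinates, and the remaining guaranteed non-constant coordinate adds only $\tfrac14$ at a midpoint. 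So your tally bottoms out around $1.25$, well short of both $2$ and $\tfrac94$. Your claim that the paired cases ``degenerate'' only when a companion term compensates is exactly the point that needs proof, and it is false at the level of Euclidean distance.

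What is missing is the idea that the relevant metric is the intrinsic path metric of the subcomplex $\rho(K_{U_0,V_0})\subset I^N$, which is a \emph{proper} subcomplex constrained by the partial order $\prec$ on the cells of $W$: if $C_1\prec C_p$ then no point of the image has $c_1<1$ and $c_p>0$ simultaneously. The paper exploits this by choosing at most four distinguished coordinates (determined by the cells of $W_0$ meeting its left and right ends, using the four-cell lemma), projecting an arbitrary PL path $\alpha$ from $y$ to $gy$ onto the corresponding face, and observing that the projection must traverse a union of two-dimensional squares glued along edges (Figure~\ref{projection}); unfolding this staircase shows the projected path has length at least $\sqrt2$ even though the chordal distance between the projected endpoints is smaller. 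A separate combinatorial argument (ruling out $C_i=E_i=F_i'$) then produces one additional non-constant coordinate, giving strict inequality in general and the $\sqrt{2.25}$ bound at midpoints. Without some version of this ``forbidden region / unfolding'' step, or another way to use convexity of $K_{U_0,V_0}$ beyond the ambient Euclidean metric, the proposal does not close.
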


 If $y=U$ is a vertex in  $K(P,w)$ then $W=U^{-1}gU$ has at least 4 cells. We consider the subcomplex $K_{U,gU}$ and its embedding into a cube $I^N$. Then $I^N$ contains $U$ and $gU$ as opposite vertices and since $N\ge 4$ the distance  $d(y,gy)$ is at least 2. So we may assume that $y$ is in the interior of some cube $Q=C(U,\Phi)$ of a positive dimension $k>0$. We can choose $U_0,V_0$ as in Subsection   4.1 so that $K_{U_0,V_0}$ contains $Q$ and $gQ$. We use the notation from Subsection 4.2.  We can consider points $y$ and $gy$ in the cube $I^N$ and estimate the length of a path $\alpha$ connecting $y$ to $gy$ in $K_{U_0,V_0}\subset I^N$. Since geodesics in $K_{U_0,V_0}$ are PL-paths we may assume that $\alpha$ is a PL-path (so its length in $K_{U_0,V_0}$ and in $I^N$ is the same).
The coordinates of $y$ will be called $y_i$ and the coordinates of $gy$ will be called $z_i$. We shall choose a particular face $F$ of $I^N$ of dimension at most 4 and we shall prove that the projection of $\alpha$ onto $F$ has the length at least $\sqrt{2}$. Then we shall prove that at least one coordinate of $\alpha$ which is constant in $F$ is not constant in $\alpha$, and therefore the length of $\alpha$ is more than $\sqrt{2}$. For this we shall need two special cases of Lemma \ref{actioncube}.

\begin{lemma}\label{dependentcoordinate}

\begin{enumerate}

\item If  $i\in T\cap L'$ and $E_i\prec F_i$  (or  $i\in T'\cap L$ and $E_i'\prec F_i'$)  and if $F_i=C_p$ (respectively $F_i'=C_p$) then $y_i=z_p$ and $y_p=0$ and $z_i=1$.

\item If $p>k$  or if $C_p$ is not equal to any $F_i'$  then $y_p\ne z_p$.

\end{enumerate}
\end{lemma}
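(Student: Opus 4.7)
The plan is to derive each clause of the lemma by a direct application of the case analysis in Lemma \ref{actioncube}, together with an argument that the cells $F_i$, $F_j$ appearing in the conclusion sit in the ``new'' part of the diagram $W$.

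First I would handle part (1) under the hypothesis $i \in T \cap L'$ with $E_i \prec F_i$ and $F_i = C_p$. The identity $y_i = z_p$ is an immediate consequence of case (4) of Lemma \ref{actioncube} applied with $j = i \in L' \cap T$, which gives $z_p = 0 + y_i$. For $y_p = 0$, I would argue that $F_i$, being attached to $\underline W$ from above, is not a first-generation cell of $W$, hence coincides with no $E_l$ (for $l \in T$) or $E_l'$ (for $l \in T'$); therefore $p > k$, and by the coordinate convention on points of the cube $Q$, the $p$-th coordinate vanishes. For $z_i = 1$, I would invoke case (3) of Lemma \ref{actioncube} applied to $C_i = E_i$. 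The exclusion $E_i \ne F_l$ is automatic by generation. The nontrivial exclusion $E_i \ne F_l'$ for $l \in L$ is where the hypothesis $E_i \prec F_i$ enters: if $E_i$ were equal to $F_l'$, then $E_i$ would have to serve simultaneously as a first-generation cell of $W$ at position $i$ and as the cancellation partner of $E_l$ at a position $l \in L$ with $l \ne i$; I would derive a contradiction by tracking the chain $E_i \prec \ldots \prec F_i$ through $W = W_1 \circ \Phi_{L'}$ and noting that no cell of $\Phi_{L'}$ sits at position $l \in L$. The parenthetical case $i \in T' \cap L$ with $E_i' \prec F_i'$ is entirely symmetric, invoking case (2) of Lemma \ref{actioncube} in place of case (4).

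Next I would handle part (2) by walking through the five cases of Lemma \ref{actioncube}. Excluding the possibility $C_p = F_l'$ rules out cases (1) and (2); what remains is (3), (4), or (5). In case (3) one has $z_p \equiv 1$, whereas $y_p$ is identically $0$ (if $p > k$) or a coordinate variable in $[0,1]$ (if $p \le k$), not identically $1$. In cases (4) and (5), $C_p = F_j$ is not first-generation, so $p > k$, giving $y_p = 0$ while $z_p$ equals $y_j$ or $1 - y_j$ with $j \le k$, so $y_p$ and $z_p$ disagree. Under the alternative disjunct $p > k$, $y_p \equiv 0$ and across all five cases $z_p$ takes one of the forms $1 - y_j$, $y_j$, or $1$ with $j \le k$, none of which is identically zero.

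The principal obstacle will be the exclusion ``$E_i \ne F_l'$ for $l \in L$'' in part (1). Although the case analysis of Lemma \ref{actioncube} reduces the matter to a combinatorial check, exploiting the hypothesis $E_i \prec F_i$ requires careful bookkeeping across the two decompositions $W = W_1 \circ \Phi_{L'}$ and $W = W_2 \circ \Phi_L^{-1}\Phi_{L'}$, with attention to how a first-generation cell of $W$ can or cannot simultaneously play the role of a cancellation partner in $W_1 \Phi$.
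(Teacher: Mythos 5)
Your part (2) and the identity $y_i=z_p$ in part (1) follow the paper's route and are fine. The gap is in how you justify $y_p=0$ and $z_i=1$ in part (1). You claim that $F_i$, ``being attached to $\underline{W}$ from above, is not a first-generation cell of $W$,'' and dually that $E_i\ne F_l$ is ``automatic by generation.'' Neither implication holds: a cell of $W$ can be attached to $\overline{W}$ from below \emph{and} to $\underline{W}$ from above at the same time. The paper's setup records this explicitly (``It may even be one of the cells $E_j$ in $W_0$ so we may have $E_j=F_i'$\,''), and in the construction of Section 5.2 the diagram $W$ is thin, so there every $F_j'$ coincides with a first-generation cell $E_{j-1}'$. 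Thus the statement you need is simply false without the hypothesis $E_i\prec F_i$, which your argument for $y_p=0$ never invokes.

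That hypothesis is precisely what repairs both steps, and more directly than the chain-tracking you sketch. Since $E_i\prec F_i$ is strict, $F_i$ has a predecessor, hence is not attached to $\overline{W}$; the cells carried by coordinates $c_1,\dots,c_k$ are exactly the $E_l$ ($l\in T$) and $E_l'$ ($l\in T'$), all attached to $\overline{W}$, so $p>k$ and $y_p=0$. Symmetrically, $E_i$ has a successor, so $\underline{E_i}\not\subset\underline{W}$, whereas every $F_l$ and every $F_l'$ is attached to $\underline{W}$ from above; hence $C_i$ equals no $F_l$ or $F_l'$ and case (3) of Lemma \ref{actioncube} gives $z_i=1$. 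Your proposed route for the exclusion $E_i\ne F_l'$ --- ``no cell of $\Phi_{L'}$ sits at position $l\in L$'' --- does not engage the right object: for $l\in L$ the cell $F_l'$ is not a cell of $\Phi_{L'}$ at all, but a cell of $W_1$ that cancels with $E_l$ in $W_1\Phi$. A last small point on part (2): you must use that $y$ lies in the interior of $Q$, so that $y_j\in(0,1)$ for $j\le k$; saying $y_p$ is ``not identically $1$'' does not rule out $y_p=z_p$ at the particular point considered.
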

\begin{proof} If $E_i\prec F_i$ then $E_i$ is not attached to $\underline{W}$  and is not equal to any $F_j$ or $F_j'$ and $F_i$ is not attached to $\overline{W}$ and is not equal to any $E_j$ or $E_j'$. Therefore $p>k$ and $y_p=0$ and $z_i=1$ by case (3) of Lemma \ref{actioncube}. The case $E_i'\prec F_i'$ is similar. Consider $z_p$. We have $p>k$ and $C_p=F_i$ and $i\in T\cap L'$, hence $z_p=y_i$ by case (4) of Lemma \ref{actioncube} (respectively $C_p=F_i'$ and $i\in T'\cap L$, hence $z_p=y_i$ by case (2) of Lemma  \ref{actioncube}).

Consider statement (2). If $p>k$ then $y_p=0$ and $z_p\ne 0$ for all cases (1) - (5) of Lemma \ref{actioncube} because $y_j\in(0,1)$. If $p\le k$ and $C_p$ is not equal to any $F_i'$ then $y_p\in (0,1)$ and $z_p=1$ by case (3) of Lemma \ref{actioncube}.
\end{proof}

\par\bigskip
\begin{proof} (of Proposition \ref{distance}.)

Let $\alpha(t)$ be a path in $K_{U_0,V_0}\subset I^N$ connecting $y$ and $gy$. We shall distinguish 2, 3 or 4 coordinates of $I^N$ and we shall project the path $\alpha(t)$ onto these coordinates. We shall prove that the length of the projection is at least $\sqrt{2}$. We shall prove that at least one other coordinate is not constant in $\alpha(t)$ and therefore the length of $\alpha(t)$ is greater than $\sqrt{2}$.

Recall that $W_0=U^{-1}gU$ has at least four cells. Suppose first that $\overline W_0$ has only one edge. Then $Q$ has dimension 1,
there is a top cell $C_1=E_1$ in $W_0$, a bottom cell $C_2=F_1'$ and at least two middle cells $C_3$ and $C_4$. Point $y$ has $c_1$ coordinate less than 1 and coordinates $c_2,c_3,c_4$ equal 0.
Point $gy$ has the coordinates $c_1,c_2,c_3$ equal 1. Then the distance from $x$ to $gx$ in $I^N$ is bigger than $\sqrt{2}$ and the length of $\alpha(t)$ is at least the same.

We may assume that $\overline W_0$ has more than one edge , so there is no cell in $W_0$ which touches both the left end of $W_0$ and the right end of $W_0$.
A cell $C_p$ of $W$ will be called a {\em neutral} cell if it is not equal to any $E_i,E_i',F_i$ or $F_i'$.
There is an odd number of cells in $W_0$ which meet the left end of $W_0$, because we have started with one such cell in $g$ and then we have added symmetrically on top and bottom.
 If there are more than two cells of $W_0$ which meet the left end then one of them is neutral, it is not attached to $\overline W$ nor to $\underline W$. Whenever there exists a neutral cell touching the left end of $W_0$  we choose one of these cells, say $C_p$. We distinguish the coordinate $c_p$.

Suppose there is only one cell $C$ in $W_0$ which meets the left end and it is not a neutral cell. It must be an upper cell (of type $C(x,xx)$) because $g$ has a unique cell meeting the left end and it is an upper cell, so any conjugate of $g$ has more upper cells than lower cells meeting the left end. Since $C$ belongs to $W_0$ there are two possibilities $C=E_1$ or $C=F_1'$ (cells $E_i'$ and $F_j$ do not belong to $W_0$).

 Suppose $C=E_1$. Since it is the unique cell meeting the left end of $W_0$ it is leftmost cell along the $\overline W_0$ and along the $\underline W_0$. The cell $F_1$ is attached along the left lower edge of $E_1$ and does not cancel with $E_1$. Therefore $1\notin L$ and cell $F_1$ belongs to $W$. It corresponds to some coordinate $c_p$ so $F_1=C_p$. We distinguish coordinates $c_1$ and $c_p$. Observe that $C_1\prec C_p$ in the sense of the order of cells described in Section 2.

Suppose now that $C=F_1'$. It is of the same type as $E_1'$. 
The top edge of $C$ is the leftmost edge of $\overline W_0$. When we attach $E_1'$ along $\overline W_0$ from above it does not cancel. Therefore $1\in T'$, cell $E_1'$ belongs to $W_1\subset W$ and is equal to $C_1$. Cell $C$ corresponds to some coordinate $p>k$ and $F_1'=C_p$.
We distinguish both coordinates $c_1$ and $c_p$. Observe that $C_1\prec C_p$.

We now consider the cells meeting the right end of $W_0$ and we proceed in the same way. If there is a neutral cell $C_q$ in $W_0$ which meets the right end we distinguish the coordinate $c_q$.

If there is only one cell of $W_0$ meeting the right end of $W_0$ then it is a lower cell, of type $C(xx,x)$. If it is not a neutral cell then it is equal to  $E_k=C_k$ (or to $F_k'=C_q$).  Then the cell $F_k=C_q$ (respectively $E_k'=C_k$) also belongs to $W$ but does not belong to $W_0$ and $C_k\prec C_q$. We distinguish the coordinates $c_k$ and $c_q$.

We now estimate the length of the projection of $\alpha(t)$ onto the face of $I^N$ spanned by the distinguished coordinates. We start with the most complicated case when we distinguish 4 coordinates $c_1,c_p,c_k,c_q$. Since the cells $C_1$ and $C_k$ belong to a thin diagram $\Phi_{T'}^{-1}\Phi_T$  they are unrelated in the partial order of the cells in $W$ and the cells $C_p$ and $C_q$ likewise. On the other hand $C_1\prec C_p$ and $C_k\prec C_q$. Coordinates corresponding to any pair of unrelated cells form a square, a 2-dimensional cube, in the image of the subcomplex $K_{U_0,V_0}$ in $I^N$. No three of these coordinates form a cube in the image of $K_{U_0,V_0}$ in $I^N$. If there are no other relations we have 4 squares as on Figure \ref{projection}.

 The coordinates on Figure  \ref{projection}  are oriented from left to right and from top to bottom. We explain the meaning of this picture. The left upper square has the coordinates $c_p=c_q=0$ and  $c_1,c_k\in [0,1]$. The right upper square has the coordinates $c_p=0$, $c_k=1$ and $c_1,c_q\in [0,1]$.
The left lower square has the coordinates $c_1=1$, $c_q=0$ and $c_p,c_k\in [0,1]$.
The right lower square has the coordinates $c_1=1$, $c_k=1$ and $c_p,c_q\in [0,1]$.

The previous discussion means the following. If a point $z$ (a point $\alpha(t)$ in particular) belongs to the image of $K_{U_0,V_0}$ in $I^N$ then its projection onto the face spanned by the distinguished coordinates lies in the union of the 4 squares in Figure \ref{projection}. Indeed if $c_1<1$ then $c_p=0$ because $C_1\prec C_p$.
If $c_k<1$ then $c_q=0$ because $C_k\prec C_q$. Therefore $z$ projects into one of the four squares.

 The only other relations may be $C_1\prec C_q$, and then the upper right square is missing, or $C_k\prec C_p$, and then the lower left square is missing, or both these squares are missing (which does not seem to be possible). By Lemma \ref{dependentcoordinate} point $y$ has the coordinates $c_1=y_1$, $c_p=0$, $c_k=y_k$ and $c_q=0$ so it lies in the left upper square. Point $gy$ has the coordinates
$c_1=1$, $c_p=y_1$, $c_k=1$ and $c_q=y_k$ so it lies in the right lower square
in the same position. 

 The distance from $y$ to $gy$ in the union of squares is $\sqrt{2}$ and if one of the squares is missing the distance may be bigger. 

Suppose now that one of the cells of $W_0$ meeting the left end of $W_0$ is a neutral cell. Then it is called $C_p$ and the coordinate $c_1$ is missing. We only have the two lower squares on Figure \ref{projection}. 
Point $y$ has the coordinates $c_p=0$, $c_k=y_k$ and $c_q=0$ so it lies on the left upper edge of the rectangle. The point $gy$ has the coordinates $c_p=1$, $c_k=1$ and $c_q=y_k$ so it lies on the right lower edge of the rectangle in the same position. The distance from $y$ to $gy$ in the plane is $\sqrt{2}$ and if the left square is missing the distance may be bigger. 

If  a cell of $W_0$ meeting the right end of $W$ is a neutral cell we have a symmetric argument and if each end has such a cell then we project onto a square formed by 2 coordinates and the points $y$ and $gy$ are at the opposite corners, again in a distance $\sqrt{2}$.

We now prove that there is at least one other cell $C_i$ in $W$ such that $i>k$ or $C_i$ is not equal to any $F_j'$. Suppose it is not true. Then for every cell $C_i$ different from the distinguished cells we have $i\le k$ and $C_i=F_j'$ and at least two of these cells belong to $W_0$. Either both cells $C_1$ and $F_1$ (or $F_1'$) are distinguished or neither one of them is. Also either both cells $C_k$ and $F_k$ (or $F_k'$) are distinguished or none of them is. The non-distinguished cells consist of (possibly $C_1$), $C_2,\dots,C_{k-1}$,(possibly $C_k$) and the same sequence also consists of (possibly $F_1'$), $F_2',\dots,F_{k-1}'$, (possibly $F_k'$). Because of the ordering of the cells from left to right we must have $C_2=F_2'$, $C_3=F_3',\dots,C_{k-1}=F_{k-1}'$ and possibly $C_1=F_1'$ or $C_k=F_k'$ if they are not distinguished. But there exists some $C_i$ which belongs to $W_0$ and then $C_i=E_i=F_i'$. This is impossible because $F_i'$ is of type $E_i'$ and not of type $E_i$.

 This proves that there exists a non-distinguished coordinate $C_i$ such that either $i>k$ or $C_i\ne F_j'$  for any $j$. By Lemma \ref{dependentcoordinate}
$y_i\ne z_i$ so the $c_i$ coordinate is not constant along the path $\alpha(t)$ and the length of the path is greater than $\sqrt{2}$. If $y$ is the midpoint of $Q$ then $y_i=0.5$ and $z_i$ is equal 0 or 1, therefore $|y_i-z_i|=0.5$ and the length of $\alpha$ is at least $\sqrt{2.25}$.

This concludes the proof of Proposition \ref{distance}. \end{proof}

\begin{figure}
\centering
\begin{picture}(250,140)(20,-50)
  \put(10,50){\line(1,0){100}}   
   \put(10,0){\line(1,0){100}}   
   \put(10,-50){\line(1,0){100}}   
   \put(10,50){\line(0,-1){100}}   
   \put(60,50){\line(0,-1){100}}   
     \put(110,50){\line(0,-1){100}}   
 \put(10,50){\circle*{3}} \put(60,50){\circle*{3}}  \put(110,50){\circle*{3}}
 \put(10,0){\circle*{3}} \put(60,0){\circle*{3}}  \put(110,0){\circle*{3}}
 \put(10,-50){\circle*{3}} \put(60,-50){\circle*{3}}  \put(110,-50){\circle*{3}}

\put(-5,20){$c_1$}  \put(-5,-30){$c_p$}  \put(30,55){$c_k$}  \put(80,55){$c_q$}

\end{picture}
\caption{Projection onto 4 coordinates.}\label{projection}
\end{figure}
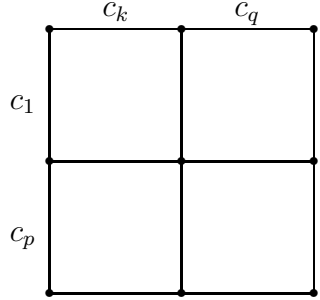
\par\bigskip
\subsection{Bounding $|g|$ from above}
\begin{lemma} For every $n$ there exists a diagram $U\in K(P,w)$ for which $W_0=U^{-1}gU$ has the form as on Figure \ref{example}.

\end{lemma}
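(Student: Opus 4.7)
The plan is to construct the required $U$ explicitly as a simple, easily described diagram $U_n$ built from $w=xxx$, and then verify by direct computation that $U_n^{-1}\circ g\circ U_n$ reduces to exactly the shape shown in Figure \ref{example}. The intuition is this: since $g$ is the fixed four-cell diagram on the right of Figure \ref{diagramg}, conjugation by $U_n$ amounts to attaching $U_n^{-1}$ on top and $U_n$ on bottom, and then canceling whatever freely reduces at the two interfaces. If $U_n$ is chosen so that it does not interact with the "active" part of $g$ except along a single bridge, then most cells of $U_n^{-1}$ and $U_n$ will cancel, but a controlled number will survive and be ordered symmetrically on the top-left and bottom-right.

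First I would define $U_n$ explicitly. The natural candidate is obtained from the degenerate diagram with top label $w=xxx$ by attaching a "tower" of $n$ cells along the middle or rightmost edge of $w$ (whichever edge the figure indicates), i.e.\ $U_n=C_1\circ C_2\circ\cdots\circ C_n$, where each $C_i$ is an atomic diagram containing a single upper cell of type $C(x,xx)$ attached along the newly exposed rightmost edge of the previous bottom path. Then $U_n$ has exactly $n$ cells, all of first-generation with respect to $\prec$ after attachment, and its bottom path has $n+3$ edges. The diagram $U_n^{-1}$ is the mirror image, attached from above.

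Next I would carry out the concatenation $U_n^{-1}\circ g\circ U_n$ and reduce. The four cells of $g$ sit with their top path on the original left portion of $w$; when $U_n^{-1}$ is stacked above $g$, only the cells of $U_n^{-1}$ that meet the part of $\overline{g}$ touched by $g$ can potentially cancel, and by choice of the attaching edge these are exactly the cells in the tower that face cells of opposite type in $g$. After all possible cancellations on both sides, the surviving cells are: the two upper cells of $g$ on the left, the two lower cells of $g$ on the right, and $n$ surviving cells that form long "cyclic" bridges on the top right and the bottom left (of the same shape as in Figure \ref{4cells} with the parameter $2n+1$). A comparison with Figure \ref{example} finishes the proof, and by Theorem 3.17 of \cite{GS} the reduction is canonical so the final shape is well defined.

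The main obstacle is purely combinatorial bookkeeping: one must verify carefully that the prescribed $U_n$ produces \emph{no unexpected} cancellations beyond the ones intended, so that the $n$ surviving cells end up attached exactly as in Figure \ref{example} rather than somewhere else, and that the generational order $\prec$ is as needed. This is conveniently handled by induction on $n$: the step from $U_n$ to $U_{n+1}$ adds one cell of $U$ on each side of $g$, and one checks that this adds exactly one more edge to the "bridge" portion of $W_0$ without disturbing the already-established structure.
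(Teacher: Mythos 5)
Your overall strategy (write down an explicit $U$, conjugate, reduce, and control the combinatorics by induction on $n$) is in the same spirit as the paper, which builds $U$ one cell at a time and tracks the effect of each elementary conjugation on the current conjugate of $g$. But there are two concrete errors that make the proposal fail as written. First, you misread the target. The diagram $W_0$ of Figure \ref{example} has \emph{exactly four} cells --- the same two upper cells on the left and two lower cells on the right as in $g$, now separated by runs of $n$ and $2n+1$ \emph{edges} of the top and bottom paths. The braces labelled ``$n$ edges'' and ``$2n+1$ edges'' count edges, not cells, exactly as in Figure \ref{4cells}, whose caption is ``Possible conjugates of $g$ with 4 cells''; Figure \ref{example} is an instance of one of those eight four-cell shapes. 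Your claimed output, with ``$n$ surviving cells'' forming the bridges, is a different diagram, and it would defeat the purpose of the construction, which is to keep the number of cells of $U^{-1}gU$ at the minimum value $4$ (guaranteed by the preceding lemma) while spreading those four cells far apart.

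Second, the proposed $U_n$ (a tower of $n$ cells of one type stacked along a single edge of $w$, with the edge left unspecified) cannot produce Figure \ref{example}. The path $\overline{W_0}$ of the target has $3n+7$ edges, so $U$ must contain $3n+4$ cells of type $C(x,xx)$ attached at many different positions along successive bottom paths, not $n$ cells over one edge; a tower over a single edge either stops cancelling after the first couple of steps (so the cell count grows without creating the prescribed gaps) or merely shifts the two cells of $g$ at that end, and in no case produces two separating gaps of lengths $n$ and $2n+1$ simultaneously. The paper instead conjugates by one atomic diagram at a time, using the operations $L_i$ (attach $C(xx,x)$ from above and $C(x,xx)$ from below along the $i$-th edge) in the specific order $L_1^{n+1}L_{2n+3}L_{2n+1}\cdots L_5L_3L_{n+4}L_{n+3}\cdots L_4L_3$, a total of $3n+4$ steps; at each step one of the two newly attached cells cancels against one of the four current cells, so the four-cell configuration is preserved throughout and migrates into the shape of Figure \ref{example}. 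To repair your argument you would need to replace your $U_n$ by such an explicit sequence of attachment positions and verify the single cancellation occurring at each step (the induction you sketch in your last paragraph is then the right way to organize that verification).
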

\begin{proof} We shall attach consecutive cells from below starting with the diagram  $w$ (the degenerate diagram) in order to get the diagram $U$. We shall not look at the consecutive forms of $U$ but rather at the effect of the attachment of cells symmetrically on top and bottom starting from $g$. 

For a spherical diagram $V$ we denote by $L_i(V)$ the operation of the attachment of a cell $C(xx,x)$ from above along the $i$-th  edge of $\overline V$ and the attachment of a cell $C(x,xx)$ from below along the $i$-th edge of $\underline V$.  We perform the operations in order from right to left.

We start with $g$ and apply 

 $L_1^{n+1}L_{2n+3}L_{2n+1}L_{2n-1}\dots L_5L_3L_{n+4}L_{n+3}L_{n+2}L_{n+1}\dots L_5L_4L_3$. We get the diagram on Figure \ref{example}.

\end{proof}

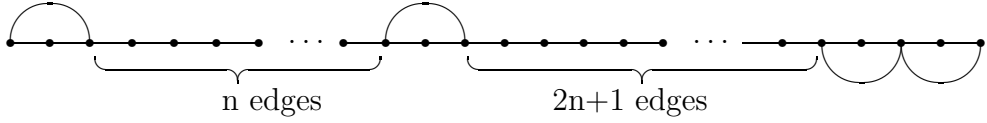
\begin{figure}
\centering
\begin{picture}(390,80)(20,-40)

\put(10,0){\line(1,0){94}}   \put(136,0){\line(1,0){121}}   
\put(287,0){\line(1,0){90}}   
 \put(10,0){\circle*{3}}  \put(25,0){\circle*{3}}
 \put(40,0){\circle*{3}}  \put(56,0){\circle*{3}}
 \put(72,0){\circle*{3}}  \put(88,0){\circle*{3}}
 \put(104,0){\circle*{3}}  \put(136,0){\circle*{3}}
 \put(152,0){\circle*{3}}  \put(167,0){\circle*{3}}
 \put(182,0){\circle*{3}}  \put(197,0){\circle*{3}}
 \put(212,0){\circle*{3}}  \put(227,0){\circle*{3}}  \put(242,0){\circle*{3}}
 \put(257,0){\circle*{3}} \put(302,0){\circle*{3}}
 \put(317,0){\circle*{3}}   \put(332,0){\circle*{3}}  \put(347,0){\circle*{3}}
 \put(362,0){\circle*{3}}  \put(377,0){\circle*{3}}

\put(25,0){\oval(30,30)[t]} \put(167,0){\oval(30,30)[t]}

 \put(332,0){\oval(30,30)[b]}
\put(362,0){\oval(30,30)[b]}

 \put(115,0){$\dots$}  
\put(69,-5){\oval(54,10)[lb]}  \put(69,-15){\oval(54,10)[rt]}

\put(123,-15){\oval(54,10)[lt]} \put(123,-5){\oval(54,10)[rb]} 

\put(90,-25){n edges}
\put(268,0){$\dots$}

\put(216,-5){\oval(66,10)[lb]}  \put(216,-15){\oval(66,10)[rt]}

\put(282,-15){\oval(66,10)[lt]} \put(282,-5){\oval(66,10)[rb]} 

\put(215,-25){2n+1 edges}
\end{picture}
\caption{The diagram $W_0$ for a suitable choice of $U$.}\label{example}
\end{figure}

\begin{figure}
\centering
\begin{picture}(390,80)(20,-40)

\put(10,0){\line(1,0){94}}   \put(136,0){\line(1,0){136}}   
\put(317,0){\line(1,0){60}}   
 \put(10,0){\circle*{3}}  \put(25,0){\circle*{3}}
 \put(40,0){\circle*{3}}  \put(56,0){\circle*{3}}
 \put(72,0){\circle*{3}}  \put(88,0){\circle*{3}}
 \put(104,0){\circle*{3}}  \put(136,0){\circle*{3}}
 \put(152,0){\circle*{3}}  \put(168,0){\circle*{3}}
 \put(182,0){\circle*{3}}  \put(197,0){\circle*{3}}
 \put(212,0){\circle*{3}}  \put(227,0){\circle*{3}}  \put(242,0){\circle*{3}}
 \put(257,0){\circle*{3}} \put(272,0){\circle*{3}}
 \put(317,0){\circle*{3}}   \put(332,0){\circle*{3}}  \put(347,0){\circle*{3}}
 \put(362,0){\circle*{3}}  \put(377,0){\circle*{3}}

\put(48,0){\oval(16,30)[b]} \put(64,0){\oval(16,30)[b]}
\put(80,0){\oval(16,30)[b]} \put(96,0){\oval(16,30)[b]}
\put(144,0){\oval(16,30)[b]}  \put(160,0){\oval(16,30)[b]}
\put(48,-15){\circle*{3}} \put(64,-15){\circle*{3}}
\put(80,-15){\circle*{3}} \put(96,-15){\circle*{3}}
\put(144,-15){\circle*{3}} \put(160,-15){\circle*{3}}

 \put(227,0){\oval(30,30)[b]}
\put(257,0){\oval(30,30)[b]}  \put(332,0){\oval(30,30)[b]}

 \put(115,0){$\dots$}  

\put(280,0){$\dots$}

\put(41,8){$E_1$}  \put(89,8){$E_4$}  \put(128,8){$E_{n-1}$}  
\put(156,8){$E_n$}  \put(214,8){$E_{n+1}$}  \put(244,8){$E_{n+2}$}
 \put(319,8){$E_{2n}$}

\end{picture}
\caption{The thin diagram $\Phi$ defining the cube $Q$.}\label{phi}
\end{figure}
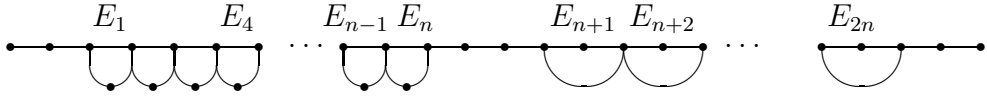

\begin{proposition} There exist points $y\in K(P,w)$ for which the distance $d(y,gy)$ is arbitrarily close to $\sqrt{2}$.

\end{proposition}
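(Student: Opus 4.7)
The plan is to exhibit a sequence of points $y_n\in K(P,w)$ with $d(y_n,gy_n)\to\sqrt{2}$. For each $n$, let $U=U_n$ be the diagram supplied by the previous lemma, so that $W_0=U_n^{-1}gU_n$ has the form displayed in Figure \ref{example}, and let $\Phi$ be the thin diagram of Figure \ref{phi}. Together these define a cube $Q_n=C(U_n,\Phi)$ of dimension $2n$ in $K(P,w)$.

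By Lemma \ref{diagram for Q} both $Q_n$ and $gQ_n$ lie in a common convex subcomplex $K_{U_0,V_0}$ of $K(P,w)$, which by Theorem \ref{convex} is convex (so the intrinsic distance from $y_n$ to $gy_n$ in $K(P,w)$ equals that in $K_{U_0,V_0}$), and by Lemma \ref{embedding} this subcomplex embeds isometrically in the Euclidean cube $I^N$. The first step is to read off from a direct comparison of Figures \ref{example} and \ref{phi} the partitions $\{1,\ldots,2n\}=T\sqcup T'=L\sqcup L'$ and the explicit list of cells $F_j$ (for $j\in L'$) and $F_j'$ (for $j\in L$) appearing in $W=U_0^{-1}V_0$. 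Feeding this into Lemma \ref{actioncube} determines coordinate-by-coordinate which of cases (1)--(5) governs the action of $g$ on each $c_i$. The key combinatorial point is that, aside from the four ``corner'' coordinates $c_1,c_p,c_k,c_q$ highlighted in the proof of Proposition \ref{distance}, the symmetric middle portion of Figure \ref{example} ensures that every remaining index $i$ with $i>k$ falls into cases (1), (2), (4), or (5), and not into the ``constant-one'' case (3).

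With the coordinate action in hand, I would choose $y_n$ as follows. Its projection onto the four distinguished coordinates is set to the corner of the staircase region in Figure \ref{projection} at which the geodesic in the region to the corresponding projection of $gy_n$ has length exactly $\sqrt{2}$. For each remaining $j\in\{1,\ldots,2n\}$, $y_j$ is set either to $1-\varepsilon_n$ or $\varepsilon_n$, the choice being dictated by whether the relevant case of Lemma \ref{actioncube} is (1) or (5) (want $y_j$ near $1$) or (2) or (4) (want $y_j$ near $0$), where $\varepsilon_n\to 0$. With this choice, every non-distinguished contribution $(y_i-z_i)^2$ is $O(\varepsilon_n^2)$.

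The upper bound on $d(y_n,gy_n)$ is then obtained by exhibiting a concrete PL-path in $K_{U_0,V_0}\subset I^N$ from $y_n$ to $gy_n$ whose length is at most $\sqrt{2}+C(n\varepsilon_n^2)^{1/2}$: it follows the $\sqrt{2}$-length diagonal of the staircase in the distinguished four-dimensional face, and executes short moves of size $O(\varepsilon_n)$ in each of the $O(n)$ remaining directions. Choosing for example $\varepsilon_n=1/n$ yields $d(y_n,gy_n)\to\sqrt{2}$. The main obstacle is the first step, namely the combinatorial bookkeeping that pins down $T,T',L,L'$ from the specific figures and confirms that no non-distinguished coordinate is of case (3) with $i>k$ (such a coordinate would contribute one full unit to $d(y_n,gy_n)^2$ regardless of our choice of $y_n$); once this is verified, the length estimate is routine.
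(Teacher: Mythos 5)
Your overall framework (take the $U_n$ supplied by the preceding lemma, the $2n$-cube $Q_n=C(U_n,\Phi)$ of Figure \ref{phi}, pass to $K_{U_0,V_0}$ embedded in $I^N$, and compute $gy$ coordinatewise via Lemma \ref{actioncube}) is the paper's, but your choice of the point $y_n$ fails, and the failure is exactly where the content of the proposition lies. You set each non-distinguished coordinate $y_j$ to $\varepsilon_n$ or $1-\varepsilon_n$ \emph{independently}, according to which case of Lemma \ref{actioncube} it falls under, and claim every contribution $(y_i-z_i)^2$ is then $O(\varepsilon_n^2)$. But the cases of Lemma \ref{actioncube} couple \emph{distinct} coordinates. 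In this example one finds $T=\{n,2n\}$, $L'=\{1,n+1\}$, and $C_i=F_{i+1}'=E_i'$ for $i\in[1,n-2]$, so $z_i=y_{i+1}$ (case (2)); moreover $z_{n-1}=1-y_n$ (case (1)), $z_n=1$ (case (3)), and $z_{2n+1}=1-y_1$ with $y_{2n+1}=0$ (case (5)). Making all of these contributions small simultaneously forces $y_1\approx 1$, $y_n\approx 1$, $y_{n-1}\approx 1-y_n\approx 0$, and $y_i\approx y_{i+1}$ for $i\le n-2$: the chain $y_1,y_2,\dots,y_{n-1}$ must descend from near $1$ to near $0$ in small steps, and a two-valued assignment $y_j\in\{\varepsilon_n,1-\varepsilon_n\}$ necessarily has a jump of size $1-2\varepsilon_n$ at some link (likewise in the second chain $n+1,\dots,2n-1$). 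That adds roughly $2$ more to $d(y,gy)^2$ and gives $d(y,gy)\approx 2$, not $\sqrt2$. The paper's point instead uses the arithmetic progression $y_i=\frac{n+1-i}{n+1}$ (and its analogue on the second chain), spreading the unavoidable total discrepancy $1$ over $n+1$ coordinates so each contributes only $(n+1)^{-2}$ and $d(y,gy)=\sqrt{2+\frac{2}{n+1}}$. This gradual interpolation, requiring larger and larger cubes as $n\to\infty$, is the essential idea of the counterexample (and the reason the infimum is not attained); without it the construction does not work.

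Two smaller points. You do not need to exhibit a PL-path for the upper bound: for this $U$ and $\Phi$ the diagram $W$ is thin, so $\rho(K_{U_0,V_0})$ is all of $I^N$ and the intrinsic distance is simply the Euclidean norm $\|y-gy\|$. Also, your claim that no non-distinguished index $i>k$ lands in case (3) is not what happens here: the two neutral cells $C_{2n+3},C_{2n+4}$ are case (3) with $i>k$ and each contributes a full unit (they are precisely the source of the $\sqrt2$), while the case-(3) coordinates $i=n,2n$ with $i\le k$ must be handled by taking $y_n,y_{2n}$ close to $1$, a case your near-$0$/near-$1$ prescription does not cover.
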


\begin{proof} We choose a diagram $U$ for which $W_0=U^{-1}gU$ has the form as on Figure \ref{example}. We now choose a $2n$-dimensional cube $Q=C(U,\Phi)$ based on this diagram, so $k=2n$. Recall the construction and the notation in Subsection 4.1.  There are $3n+7$ edges in $\underline U$ and in $\overline{W}_0$ and we call them $x_1,\dots,x_{3n+7}$, from left to right. Cells $E_1,\dots,E_k$ of $\Phi$ are attached along $\underline U$ as follows. Cells $E_1,\dots,E_n$ are of type $C(x,xx)$ and $E_i$ is attached along $x_{i+2}$. Cells $E_{n+1},\dots,E_{2n}$ are of type $C(xx,x)$ and $E_j$ is attached along $x_{2j+4-n}$ and $x_{2j+5-n}$.
So $E_{2n}$ is attached along $x_{3n+4}$ and $x_{3n+5}$. Diagram $\Phi$ is shown on Figure \ref{phi}. When we attach $\Phi^{-1}$ to $W_0$ from above (and denote the cells of $\Phi^{-1}$ by $E_i'$) only cells $E_n'$ and $E_{2n}'$ cancel. Therefore set $T$ consists of two numbers $n$ and $2n$. The remaining cells of $\Phi^{-1}$ are attached to $W_0$ from above and are called $E_i'$ for $i=1,\dots,n-1$ and for $i=n+1,\dots,2n-1$ as on Figure \ref{example0}. We get $W_1$. We now attach $\Phi$ to $W_1$ along $\underline W_1$ and check which cells of $\Phi$ cancel. Here we denote the cells of $\Phi$ by $F_i$.

The cell $F_1$ attached along the third edge of $\underline W_1$ does not cancel so we have to attach it to get $W$. Number 1 belongs to $L'$. Also $F_{n+1}$ attached along the edges number $(2n+6-n)$ and $(2n+7-n)$ of $\underline W_1$ does not cancel so we have to attach it to get $W$. Number $n+1$ belongs to $L'$. Other cells $F_i$ cancel with suitable cells of $W_1$.
For $i=2,\dots,n$ and for $i=n+2,\dots,2n$  cell $F_i$ cancels with $E_{i-1}'$ so we have $F_i'=E_{i-1}'$. Cell $F_1$ is called $C_{2n+1}$ and cell $F_{n+1}$ is called $C_{2n+2}$. Diagram $W$ has $2n+4$ cells shown on Figure \ref{example1}.

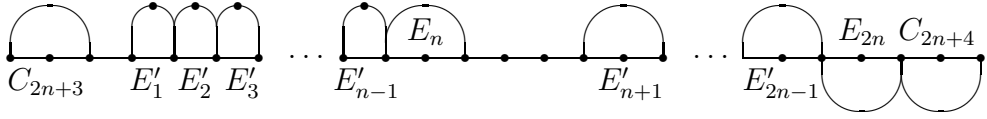
\begin{figure}
\centering
\begin{picture}(390,80)(20,-40)

\put(10,0){\line(1,0){94}}   \put(136,0){\line(1,0){121}}   
\put(287,0){\line(1,0){90}}   
 \put(10,0){\circle*{3}}  \put(25,0){\circle*{3}}
 \put(40,0){\circle*{3}}  \put(56,0){\circle*{3}}
 \put(72,0){\circle*{3}}  \put(88,0){\circle*{3}}
 \put(104,0){\circle*{3}}  \put(136,0){\circle*{3}}
 \put(152,0){\circle*{3}}  \put(167,0){\circle*{3}}
 \put(182,0){\circle*{3}}  \put(197,0){\circle*{3}}
 \put(212,0){\circle*{3}}  \put(227,0){\circle*{3}}  \put(242,0){\circle*{3}}
 \put(257,0){\circle*{3}} \put(302,0){\circle*{3}}
 \put(317,0){\circle*{3}}   \put(332,0){\circle*{3}}  \put(347,0){\circle*{3}}
 \put(362,0){\circle*{3}}  \put(377,0){\circle*{3}}

\put(25,0){\oval(30,40)[t]} \put(167,0){\oval(30,40)[t]}

 \put(332,0){\oval(30,40)[b]}
\put(362,0){\oval(30,40)[b]}

 \put(115,0){$\dots$}  

\put(268,0){$\dots$}
  \put(64,0){\oval(16,40)[t]}
 \put(80,0){\oval(16,40)[t]}
 \put(96,0){\oval(16,40)[t]}
 \put(144,0){\oval(16,40)[t]} 
 \put(242,0){\oval(30,40)[t]}   \put(302,0){\oval(30,40)[t]}

  \put(64,20){\circle*{3}} 
 \put(80,20){\circle*{3}} 
 \put(96,20){\circle*{3}} 
 \put(144,20){\circle*{3}} 
\put(9,-12){$C_{2n+3}$} 
  \put(55,-12){$E_1'$}
 \put(73,-12){$E_2'$}    \put(91,-12){$E_3'$}
\put(160,7){$E_n$}  
\put(232,-12){$E_{n+1}'$}  \put(132,-12){$E_{n-1}'$} \put(287,-12){$E_{2n-1}'$}
\put(346,7){$C_{2n+4}$}   \put(323,7){$E_{2n}$}
\end{picture}
\caption{Diagram $W_1$ for the counterexample.}\label{example0}
\end{figure}

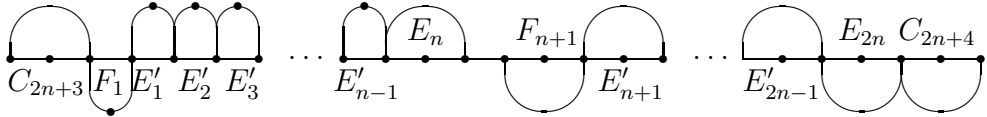
\begin{figure}
\centering
\begin{picture}(390,80)(20,-40)

\put(10,0){\line(1,0){94}}   \put(136,0){\line(1,0){121}}   
\put(287,0){\line(1,0){90}}   
 \put(10,0){\circle*{3}}  \put(25,0){\circle*{3}}
 \put(40,0){\circle*{3}}  \put(56,0){\circle*{3}}
 \put(72,0){\circle*{3}}  \put(88,0){\circle*{3}}
 \put(104,0){\circle*{3}}  \put(136,0){\circle*{3}}
 \put(152,0){\circle*{3}}  \put(167,0){\circle*{3}}
 \put(182,0){\circle*{3}}  \put(197,0){\circle*{3}}
 \put(212,0){\circle*{3}}  \put(227,0){\circle*{3}}  \put(242,0){\circle*{3}}
 \put(257,0){\circle*{3}} \put(302,0){\circle*{3}}
 \put(317,0){\circle*{3}}   \put(332,0){\circle*{3}}  \put(347,0){\circle*{3}}
 \put(362,0){\circle*{3}}  \put(377,0){\circle*{3}}

\put(25,0){\oval(30,40)[t]} \put(167,0){\oval(30,40)[t]}

 \put(332,0){\oval(30,40)[b]}
\put(362,0){\oval(30,40)[b]}

 \put(115,0){$\dots$}  

\put(268,0){$\dots$}
 \put(48,0){\oval(16,40)[b]}   \put(64,0){\oval(16,40)[t]}
 \put(80,0){\oval(16,40)[t]}
 \put(96,0){\oval(16,40)[t]}
 \put(144,0){\oval(16,40)[t]}   \put(212,0){\oval(30,40)[b]}
 \put(242,0){\oval(30,40)[t]}   \put(302,0){\oval(30,40)[t]}

 \put(48,-20){\circle*{3}}   \put(64,20){\circle*{3}} 
 \put(80,20){\circle*{3}} 
 \put(96,20){\circle*{3}} 
 \put(144,20){\circle*{3}} 
\put(9,-12){$C_{2n+3}$}  \put(41,-12){$F_1$} 
  \put(55,-12){$E_1'$}
 \put(73,-12){$E_2'$}    \put(91,-12){$E_3'$}
\put(160,7){$E_n$}   \put(201,7){$F_{n+1}$}
\put(232,-12){$E_{n+1}'$}  \put(132,-12){$E_{n-1}'$} \put(287,-12){$E_{2n-1}'$}
\put(346,7){$C_{2n+4}$}   \put(323,7){$E_{2n}$}
\end{picture}
\caption{Diagram $W$ for the counterexample.}\label{example1}
\end{figure}

Observe that the diagram $W$ on Figure \ref{example1} is a thin diagram, all its cells are independent in the partial ordering of the cells in $W$. Therefore the image of $K_{U_0,V_0}$ in $I^N$ is equal to the whole cube $I^N$ and the path metric in $K_{U_0,V_0}$ is equal to the Euclidean distance in $I^N$. We have $2n$ coordinates in the cube $Q$ and $2n+4$ coordinates in $I^N$.
Only numbers $n$ and $2n$ belong to $T$ and only $1$ and $n+1$  belong to $L'$. 

We choose the point $y\in Q$ with the coordinates 
 $$y_i=\frac{n+1-i}{n+1} \ \ for \ \ i=1,\dots,n-1, \ \ y_n=\frac{n}{n+1}, $$
$$ y_i=\frac{2n+1-i}{n+1} \ \ for \ \ i=n+1,\dots,2n-1, \ \ y_{2n}=\frac{n}{n+1}.$$

We now compute $z=gy$ from Lemma \ref{actioncube} and we compute the difference $|z_i-y_i|$.
Observe that $L\cap T=\{n,2n\}$, $L\cap T'=\mathcal K-\{1,n,n+1,2n\}$, $L'\cap T=\varnothing$ and $L'\cap T'=\{1,n+1\}$.

If $i\in[1,n-2]$ or  $i\in[n+1,2n-2]$ then $C_i=F_{i+1}'$, $i+1\in L\cap T'$ (case (2)), $z_i=y_{i+1}$ and $|z_i-y_i|=\frac{1}{n+1}$.

If $i=n-1$ or $i=2n-1$ then $C_i=F_{i+1}'$, $i+1\in L\cap T$ and we are in case (1). We have $y_i=\frac{2}{n+1}$, $z_i=1-y_{i+1}=\frac{1}{n+1}$ and $|z_i-y_i|=\frac{1}{n+1}$.

If $i=n$ or $i=2n$ then we are in case (3), $z_i=1$ and $|z_i-y_i|=\frac{1}{n+1}$.

If $i=2n+1$ then $C_i=F_1$, if $i=2n+2$ then $C_i=F_{n+1}$ and we are in case (5). We have  $z_{2n+1}=1-y_1=\frac{1}{n+1}$, $z_{2n+2}=1-y_{n+1}=\frac{1}{n+1}$, $y_i=0$ 
 and   $|z_i-y_i|=\frac{1}{n+1}$.

If $i=2n+3$ or $i=2n+4$ then we are in case (3), $y_i=0$  and  $|z_i-y_i|=1$.

 Thus the distance 

$$d(y,gy)=\sqrt{2+\Sigma_{i=1}^{2n+2} \frac{1}{(n+1)^2}}=\sqrt{2+\frac{2}{n+1}}$$

is arbitrarily close to $\sqrt{2}$ for $n$ sufficiently large.

\end{proof}
This concludes the proof of Theorem 1. The isometry $g$ does not act in a semi-simple way on the complex $K(P,w)$.

\par\bigskip

Yael Algom-Kfir,
Yale University,
 Mathematics Department,

New Haven, CT 06520-8283

 yael.algomkfir@yale.edu\par\medskip

Bronislaw Wajnryb,
Department of Mathematics,

Rzeszow University of Technology,
35-959 Rzeszow, Poland,

dwajnryb@prz.edu.pl\par\medskip

Pawel Witowicz,
Department of Mathematics,

Rzeszow University of Technology,
35-959 Rzeszow, Poland,

witowicz@prz.edu.pl

\begin{thebibliography}{99}
\bibitem[Ber]{berestovski} V. N. Berestovskii, {\em Borsuk's problem on metrization of a polyhedron}, Dokl. Akad. Nauk SSSR 27 (1983), 8-19.
\bibitem[B] {B} M.R. Bridson, {\em On the semisimplicity of polyhedral isometries}, Proc. Am. Math. Soc. 127 (7) (1999), 2143-2146. 
\bibitem[BH]{BH} M.R. Bridson, A. Haefliger, {\em Metric spaces of non-positive curvature}, Springer, 319 (1999).
\bibitem[CV]{vossen} S. Chon-Vossen, {\em K\"{u}rzeste Wege und total Kr\"{u}mmung  auf Fl\"{a}chen}, Compositio Math. 2 (1935), 69-133.
\bibitem[F]{farley} D. S. Farley, {\em Finiteness and CAT(0) properties of diagram groups}, Topology 42 (2003), no. 5, 1065-1082.
\bibitem[G] {gromov} M. Gromov, {\em Hyperbolic groups}, Essays in group thory (S. M. Gersten, ed.), Springer Verlag, MSRI Publ. 8 (1987), 75-263.
\bibitem[GS]{GS} V. Guba, M. Sapir, {\em Diagram groups}, Mem. Amer. Math. Soc. 130 (1997).
\bibitem[K]{kilibarda} V. Kilibarda, {\em On the algebra of semigroup diagrams}, PhD Thesis, Univ. of Nebraska, 1994.
\end{thebibliography}
\end{document}